\newcommand{\pn}{{\rm pn}}
\newcommand{\cp}{\mathbin{\Box}}
\newcommand{\vertex}{\node[vertex]}
\tikzstyle{vertex}=[circle, draw, inner sep=0pt, minimum size=6pt]
\newtheorem{theorem}{Theorem}
\newtheorem{lemma}{Lemma}
\newtheorem{corollary}{Corollary}
\newtheorem{prop}{Proposition}
\begin{document}

\title{On well-dominated graphs}

\author{$^1$Sarah E. Anderson, $^2$Kirsti Kuenzel and $^3$Douglas F. Rall \\
\\
$^1$Department of Mathematics \\
University of St. Thomas \\
St. Paul, Minnesota  USA\\
\small \tt Email: ande1298@stthomas.edu  \\
\\
$^2$Department of Mathematics \\
Trinity College \\
Hartford, Connecticut  USA \\
\small \tt Email: kwashmath@gmail.com\\
\\
$^3$Department of Mathematics \\
Furman University \\
Greenville, SC, USA\\
\small \tt Email: doug.rall@furman.edu}

\date{}
\maketitle

\begin{abstract}
A graph is \emph{well-dominated} if all of its minimal dominating sets have the same cardinality.  We prove that at least one of the factors is well-dominated if the Cartesian product of two graphs is well-dominated.  In addition, we show that the Cartesian product of two connected, triangle-free graphs is well-dominated if and only if both graphs are complete graphs of order $2$.  Under the assumption that at least one of the connected graphs $G$ or $H$ has no isolatable vertices, we prove that the direct product of $G$ and $H$ is well-dominated if and only if either $G=H=K_3$ or $G=K_2$ and $H$ is either the $4$-cycle or the corona of a connected graph.  Furthermore, we show that the disjunctive product of two connected graphs is well-dominated if and only if one of the factors is a complete graph and the other factor has domination number at most $2$.
\end{abstract}

{\small \textbf{Keywords:} well-dominated, well-covered, Cartesian product, direct product, disjunctive product } \\
\indent {\small \textbf{AMS subject classification:} 05C69, 05C76}

\section{Introduction}
Given a graph $G$ and a positive integer $k$, deciding whether $G$ has a dominating set of cardinality at most $k$ is one of the classic NP-complete problems~\cite{gj-1979}.  If we restrict the input graph to come from the class of well-dominated graphs (those graphs for which every minimal dominating set has the same cardinality), then this decision problem is solvable in linear time.  The following simple procedure can be used to compute the order of a minimum dominating set in a well-dominated graph $G$.  Let $D=V(G)$.  Choose any linear ordering of $V(G)$ and process the vertices one at a time in this order.  When a vertex $v$ is processed, replace $D$ by $D-\{v\}$ if $D-\{v\}$ dominates $G$.  When all vertices in the sequence have been checked the resulting set $D$ is a minimal (and hence minimum) dominating set of $G$.  Since every maximal independent set of an arbitrary graph is also a minimal dominating set, one could also compute the domination number of a well-dominated graph by using a greedy algorithm to find a maximal independent set.

The study of well-dominated graphs was initiated by Finbow, Hartnell and Nowakowski~\cite{wd}, and in that seminal paper they determined the well-dominated bipartite graphs as well as the structure of well-dominated graphs with no cycle of length less than $5$.  In~\cite{tv-1990} Topp and Volkmann gave a characterization of well-dominated block graphs and unicyclic graphs.  The $4$-connected, $4$-regular, claw-free, well-dominated graphs were characterized by Gionet, King and Sha~\cite{gks-2011}.  In more recent work, Levit and Tankus~\cite{lt-2017} proved that a graph with no cycles of length $4$ or $5$ is well-dominated if and only if it is well-covered.  G\"{o}z\"{u}pek, Hujdurovi\'{c} and Milani\v{c}~\cite{ghm-2017} determined the structure of the well-dominated lexicographic product graphs.

In this paper we first prove there are exactly eleven connected, well-dominated, triangle-free graphs whose domination number is at most $3$.  In particular, we show that $G$ is such a graph if and only if $G$ is one of $K_1,K_2,P_4,C_4,C_5,C_7$, the corona of $P_3$, or one of the four graphs in Figure~\ref{fig:dom3}.

The main focus of the paper is on well-dominated graph products, and we investigate the Cartesian, direct and disjunctive products.  The two main results concerning well-dominated Cartesian products are the following, the first of which has no restrictions on the factors while the second requires the two factors to be triangle-free.

\begin{theorem} \label{thm:wdCartesian}
Let $G$ and $H$ be connected graphs. If $G \cp H$ is well-dominated, then either $G$ or $H$ is well-dominated.
\end{theorem}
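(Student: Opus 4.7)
The plan is to prove the contrapositive: if neither $G$ nor $H$ is well-dominated, I will produce two minimal dominating sets of $G\cp H$ with distinct cardinalities. Fix minimal dominating sets $D_1,D_2$ of $G$ with $|D_1|<|D_2|$ and minimal dominating sets $E_1,E_2$ of $H$ with $|E_1|<|E_2|$.

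The central tool will be a fiber-lifting lemma. For any minimal dominating set $D$ of $G$, the set $D\times V(H)$ dominates $G\cp H$, since each vertex $(g,h)$ is dominated by $(d,h)$ for any $d\in D$ with $d=g$ or $d\sim_G g$. I first aim to characterize when $D\times V(H)$ is itself minimal; I expect the condition to be (i) $H$ has no isolated vertex, so that each $(d,h)$ is dominated within its own $H$-fiber, and (ii) every $d\in D$ has an external $D$-private neighbor $p_d\in N_G(d)\setminus D$ with $N_G(p_d)\cap D=\{d\}$, so that $(p_d,h)$ forces $(d,h)$ into any minimal dominating subset of $D\times V(H)$. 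Provided $H$ is not a single vertex (otherwise $G\cp H\cong G$ and the conclusion is immediate), and provided both $D_1$ and $D_2$ satisfy (ii), the sets $D_1\times V(H)$ and $D_2\times V(H)$ furnish minimal dominating sets of $G\cp H$ with distinct sizes $|D_1|\cdot|V(H)|\neq|D_2|\cdot|V(H)|$, and we are finished.

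The main obstacle is handling a $D_i$ that contains a vertex $d$ with no external private neighbor (so $d$ is its own $D_i$-private neighbor in $G$). For such $d$, the slice $\{d\}\times V(H)$ is over-inclusive, and my plan is to replace it by $\{d\}\times F_d$ for a minimal dominating set $F_d$ of $H$ chosen from $\{E_1,E_2\}$. Dominance is preserved because $F_d$ dominates the $d$-column, and minimality can be checked by exhibiting a private neighbor in $G\cp H$ for each retained vertex, inherited from the private-neighbor structure of $F_d$ in $H$. Performing this trimming for each pair $(i,j)\in\{1,2\}^2$ produces up to four candidate minimal dominating sets of $G\cp H$ whose cardinalities are linear expressions in $|D_i|$, $|E_j|$, and the number of self-private vertices of $D_i$. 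Because $|D_1|<|D_2|$ and $|E_1|<|E_2|$, an elementary arithmetic comparison, supplemented if needed by symmetric role-reversal between $G$ and $H$, forces at least two of these four cardinalities to differ, contradicting the well-dominatedness of $G\cp H$. The delicate point of the whole argument lies in this final counting step, where I must rule out any arithmetic conspiracy that would equalize all four sizes.
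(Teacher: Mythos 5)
Your central construction is the same one the paper uses: for a minimal dominating set $D_i$ of $G$, keep full columns over the vertices that have an external private neighbor and trim the columns over the ``self-private'' vertices $A_i=\{d\in D_i:\pn[d,D_i]=\{d\}\}$ down to $\{d\}\times E_j$, i.e.\ pass to $S_{ij}=(A_i\times E_j)\cup\bigl((D_i\setminus A_i)\times V(H)\bigr)$; the paper's set $S=(A\times D_H)\cup((D_G-A)\times V(H))$ is exactly this. What you do differently is organizational: the paper assumes $G\cp H$ is well-dominated, uses Hartnell--Rall to make one factor well-covered, and uses the Bollob\'as--Cockayne lemma to get an open irredundant minimum dominating set $I$ of $G$, so that $I\times V(H)$ pins the benchmark $\gamma(G\cp H)=\gamma(G)\,|V(H)|$; you instead argue the contrapositive and compare the trimmed sets with each other. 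That reorganization is fine, and the ``arithmetic conspiracy'' you single out as the delicate point cannot occur: $|S_{ij}|=|A_i|\,|E_j|+(|D_i|-|A_i|)\,|V(H)|$, so if some $A_i\neq\emptyset$ you finish immediately by varying $E_j$, and if $A_1=A_2=\emptyset$ the untrimmed lifts $D_1\times V(H)$ and $D_2\times V(H)$ already have different sizes. Your minimality analysis is also sound (a vertex of $A_i$ has no neighbor inside $D_i$, so private neighbors in its column are inherited from $E_j$).

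The genuine gap is the step you dismiss in one sentence: ``dominance is preserved because $F_d$ dominates the $d$-column.'' That only covers vertices lying in the trimmed columns themselves. The vertices at risk are $(g,h)$ with $g\notin D_i$ such that every $D_i$-neighbor of $g$ lies in $A_i$ (such a $g$ necessarily has at least two neighbors in $A_i$, since it cannot be a private neighbor of any of them); once all of its supporting columns are trimmed, $(g,h)$ has no neighbor in $S_{ij}$ whenever $h\notin E_j$. This configuration is not vacuous: in $C_4$, a non-adjacent pair is a minimal dominating set consisting entirely of self-private vertices, and the other two vertices see only them. Of course $C_4$ is well-dominated, so this is not a counterexample to the theorem, but it shows your justification proves nothing as stated, because it invokes no hypothesis that rules such configurations out; in your contrapositive framing you have even less to lean on than the paper does (you never obtain that $G$ or $H$ is well-covered, since you do not assume the product is well-dominated). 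So the real work of the proof is precisely the domination of the mixed set --- the claim the paper makes for its $S$ under its standing hypotheses, and which you must either prove directly, or avoid by restricting which $D_i$ you lift (e.g.\ via open irredundance, as the paper does for its benchmark set) or which columns you trim. Until that is supplied, the proposal is incomplete, while the counting step you worried about needs no care at all.
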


\begin{theorem} \label{thm:maincp}
Let $G$ and $H$ be nontrivial, connected graphs both of which have girth at least $4$. The Cartesian product  $G\,\Box\, H$ is well-dominated  if and only if $G = H = K_2$.
\end{theorem}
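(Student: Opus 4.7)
The reverse implication is immediate: $K_2 \cp K_2 \cong C_4$, and every minimal dominating set of $C_4$ has exactly two vertices. For the converse, suppose $G \cp H$ is well-dominated with $G$ and $H$ both connected, nontrivial, and triangle-free. My plan is to use Theorem~\ref{thm:wdCartesian} to reduce to the case that one factor is well-dominated, then invoke the structural classification of connected triangle-free well-dominated graphs to force this factor to be $K_2$, and finally to analyze $K_2 \cp H$ directly to conclude $H = K_2$.

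By Theorem~\ref{thm:wdCartesian}, at least one of $G,H$ is well-dominated; assume without loss of generality that $G$ is. Since $G$ is also connected, nontrivial, and triangle-free, I would use the classification results mentioned in the introduction to narrow $G$ down: either $\gamma(G)\le 3$ and $G$ is one of the eleven graphs identified there (among them $K_2$, $P_4$, $C_4$, $C_5$, $C_7$, the corona of $P_3$, and the four graphs of Figure~\ref{fig:dom3}), or $\gamma(G)\ge 4$, in which case the Finbow--Hartnell--Nowakowski characterization for girth $\ge 5$ applies. The core step is to show that for every such $G$ other than $K_2$ and every nontrivial connected triangle-free $H$, the product $G \cp H$ admits two minimal dominating sets of different sizes, contradicting the hypothesis. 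For each candidate $G$ in the finite list this can be carried out by producing two explicit sets: a ``diagonal'' dominating set obtained by placing a $\gamma$-set of $G$ on one $H$-fiber together with a $\gamma$-set of $H$ on one $G$-fiber, and a ``fat'' dominating set of the form $I \times V(H)$ for an appropriate independent dominating set $I$ of $G$; after pruning each to minimality, one verifies the sizes differ.

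Once the reduction to $G = K_2$ is complete, parametrize any dominating set of $K_2 \cp H$ as a pair $(A,B)$ of subsets of $V(H)$ with $N_H[A]\cup B = N_H[B]\cup A = V(H)$, so that minimality becomes a private-neighbor condition on each of $A$ and $B$. Assuming $|V(H)|\ge 3$, split on whether some vertex $u$ of $H$ dominates $H$. In the former subcase, $H$ is a star $K_{1,k}$ with $k\ge 2$, and $\{(u,1),(u,2)\}$ together with $\{(u,1)\}\cup\{(v,2): v \text{ is a leaf of } H\}$ are minimal dominating sets of sizes $2$ and $k+1$, respectively. In the latter subcase, the absence of a universal vertex combined with connectedness allows a local construction around a short path in $H$ producing two explicit minimal dominating sets of distinct sizes.

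The main obstacle is the uniform handling of the girth-$\ge 5$ family in the reduction step: there is no \emph{a priori} bound on $|V(G)|$ in this range, so an exhaustive case analysis is unavailable and a general lemma is required. The key ingredient I expect to need is that sparsity (forced by girth $\ge 5$) gives $G$ many vertices of small degree, which permits constructing a small efficient dominating set of $G \cp H$ alongside a much larger maximal independent (hence minimal dominating) set, showing that $G \cp H$ is not well-dominated.
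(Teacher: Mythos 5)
Your overall plan---reduce to the case where one factor is $K_2$ and then analyze $K_2 \cp H$ directly---matches the paper's in outline, but the reduction you propose has a genuine gap and does not go through as described. You invoke Theorem~\ref{thm:wdCartesian} to get that one factor, say $G$, is well-dominated, and then claim that either $\gamma(G)\le 3$ (so $G$ lies in the finite list of Section~\ref{sec:smallwd}) or $\gamma(G)\ge 4$ and ``the Finbow--Hartnell--Nowakowski characterization for girth $\ge 5$ applies.'' That dichotomy is false: $G$ is only assumed triangle-free, so it may contain a $4$-cycle, and there are connected, triangle-free, well-dominated graphs of girth exactly $4$ with arbitrarily large domination number (for instance the corona $F\circ K_1$ of any connected triangle-free $F$ containing a $4$-cycle, such as $C_4\circ K_1$ with $\gamma=4$). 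No classification of these is available in the paper, so your case analysis cannot be made exhaustive. Moreover, even within the cases you do cover, the ``core step''---that $G\cp H$ fails to be well-dominated for every candidate $G\ne K_2$ and every nontrivial connected triangle-free $H$---is only sketched; you explicitly defer the girth-$\ge 5$ family to ``a general lemma'' whose statement and proof you do not supply, and the non-universal-vertex subcase of your $K_2\cp H$ analysis is likewise left as a promise of ``a local construction.''

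The paper sidesteps all of this by working at the well-covered level rather than the well-dominated level: since $G\cp H$ well-dominated implies $G\cp H$ well-covered (Proposition~\ref{prop:wd-Implies-wc}), the known Theorem~\ref{thm:girth4cart} immediately forces one factor to be $K_2$, with no classification of the other factor needed. The endgame is then Lemma~\ref{lem:wdprisms}: $V(G)\times\{1\}$ is a minimal dominating set of $G\cp K_2$ of size $|V(G)|$, and if $G$ has a vertex of degree at least $2$ one exhibits a dominating set of size $|V(G)|-1$, so well-domination forces $\Delta(G)=1$ and $G=K_2$; this argument does not even need $H$ triangle-free or a universal-vertex split. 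If you want to salvage your approach, replace your reduction by the appeal to Theorem~\ref{thm:girth4cart} and replace your two-subcase analysis of $K_2\cp H$ by the single counting argument above.
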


In Section~\ref{sec:DirP} we determine all well-dominated direct products if at least one of the factors
does not have a vertex that can be isolated by removing from the graph the closed neighborhood of some independent set of vertices.

\begin{theorem} \label{thm:wd-direct-no-isolatable}
Let $G$ and $H$ be nontrivial connected graphs such that at least one of $G$ or $H$ has no isolatable vertices.  The
direct product $G \times H$ is well-dominated if and only if $G=H=K_3$ or at least one of the factors is $K_2$ and the other factor is
a $4$-cycle or the corona of a connected graph.
\end{theorem}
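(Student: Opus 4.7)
The plan is to prove both implications separately. For the sufficiency direction, I would verify that each of the listed products is well-dominated. The graph $K_3\times K_3$ is vertex-transitive on nine vertices, and a direct enumeration confirms that every minimal dominating set has cardinality three. When $H$ is bipartite, Weichsel's theorem gives $K_2\times H\cong H\sqcup H$, which is well-dominated if and only if $H$ is; this handles $H=C_4$ and the bipartite coronas, using the well-known facts that $C_4$ and every corona of a connected graph are well-dominated. For the remaining subcase where $H$ is the corona of a non-bipartite connected graph $F$, the product $K_2\times H$ is connected, and I would argue that the pendants of the corona force every minimal dominating set to contain exactly one vertex from each pendant pair, yielding the common cardinality $2|V(F)|$ and hence well-domination.

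For the necessity direction, assume $G\times H$ is well-dominated and, without loss of generality, that $G$ has no isolatable vertices. The hypothesis on $G$ is the essential leverage: it guarantees that certain naturally constructed dominating sets of $G\times H$, obtained by lifting independent dominating sets from one factor, actually extend to minimal dominating sets of a predictable size, which can then be compared with other minimal dominating sets of a different shape. I would first record the observation that adjacency in $G\times H$ requires adjacency in \emph{both} coordinates, so minimal dominating sets behave coherently across the $H$-layers $V(G)\times\{h\}$ and each element of a minimal dominating set has a private neighbor constrained by this structure. The argument then splits on the order of $G$: if $|V(G)|\ge 4$, the no-isolatability hypothesis enables two minimal dominating sets of different sizes to be exhibited, forcing a contradiction; if $G=K_3$, a layer-by-layer analysis of $K_3\times H$ pins down $H$ as $K_3$; and if $G=K_2$, I would split on whether $H$ is bipartite. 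In the bipartite case, $K_2\times H\cong H\sqcup H$ being well-dominated forces $H$ to be well-dominated, and the structural constraints from minimal dominating sets of $K_2\times H$ narrow $H$ down to $C_4$ or a bipartite corona; in the non-bipartite case, direct analysis of minimal dominating sets forces a pendant structure on $H$, exhibiting it as the corona of a non-bipartite connected graph.

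The main obstacle is the $G=K_2$ case with non-bipartite $H$. Extracting the corona structure from well-domination of the connected product $K_2\times H$ requires showing that each non-pendant vertex of $H$ has its own pendant neighbor and that distinct non-pendants do not share one. I would tackle this by, for each candidate non-pendant vertex $v\in V(H)$, constructing two minimal dominating sets of $K_2\times H$ that differ in a controlled way around the fiber over $v$; comparing their cardinalities under the well-domination hypothesis then forces the required pendant neighbor to exist, and an inductive trimming argument rules out shared pendants. By contrast, the $G=K_3$ case is cleaner because the symmetry between the three $H$-layers of $K_3\times H$, together with the no-isolatability of $K_3$, severely restricts the possible private neighbors of each element of a minimal dominating set and quickly reduces $H$ to $K_3$.
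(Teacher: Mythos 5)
Your sufficiency direction is fine (and essentially routine), but the necessity direction has real gaps at exactly the points where the theorem's content lies. The crucial missing step is your case $|V(G)|\ge 4$: you assert that "the no-isolatability hypothesis enables two minimal dominating sets of different sizes to be exhibited," but you give no construction, and this is not easy --- there are many non-complete graphs of order at least $4$ with no isolatable vertices (for instance $C_5$ and all complete multipartite graphs), and the complete graphs $K_n$ with $n\ge 4$ must be eliminated as well. The paper gets this step from two nontrivial external results: a theorem on well-covered direct products stating that if $G\times H$ is well-covered and one factor has no isolatable vertices then that factor is complete, together with the bound $\gamma(G\times H)\le 3\gamma(G)\gamma(H)$, which (applied to the maximal independent set $I\times V(H)$, a minimum dominating set of the well-dominated product) forces $3\gamma \ge n$ for each factor and hence pins the complete factor to $K_2$ or $K_3$. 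Nothing in your sketch supplies a substitute for either ingredient. Likewise, your claim that the $G=K_3$ case "quickly reduces $H$ to $K_3$" by symmetry is not an argument: the paper needs a dedicated lemma that every maximal independent set of the other factor is a $2$-packing (proved by building a rather delicate minimal dominating set of the product and comparing cardinalities), followed by an exchange argument on maximum independent sets minimizing pairwise distance, and finally the Topp--Volkmann identity $\alpha(G)|V(H)|=\alpha(H)|V(G)|$ to conclude the factor is $K_3$ rather than a larger complete graph.

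The $K_2$ case is also incomplete as proposed. In the non-bipartite subcase you plan to extract the corona structure of $H$ directly from well-domination of the connected graph $K_2\times H$; you yourself identify this as the main obstacle and do not carry it out. In the bipartite subcase, knowing only that $H$ is well-dominated does not "narrow $H$ down to $C_4$ or a bipartite corona" without importing a further characterization. The paper avoids both difficulties with a uniform argument that makes no bipartite split: every minimal dominating set $D$ of $H$ lifts to the minimal dominating set $D\times V(K_2)$ of the product (private neighbors transfer coordinatewise), so $H$ is well-dominated; then well-coveredness of the product and the identity $\alpha(K_2)|V(H)|=\alpha(H)|V(K_2)|$ give $\gamma(H)=\alpha(H)=|V(H)|/2$, and the Payan--Xuong characterization of connected graphs with domination number half their order yields exactly $C_4$ or a corona of a connected graph. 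I would encourage you to rebuild your necessity argument around these quantitative facts ($\gamma=n/2$ and $3\gamma\ge n$) rather than around ad hoc constructions of minimal dominating sets, which is where your outline stalls.
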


Our main result concerning disjunctive products provides a complete characterization of well-dominated disjunctive products of two connected graphs of order at least $2$.

\begin{theorem} \label{thm:disj-characterization}
Let $G$ and $H$ be nontrivial connected graphs.  The disjunctive product $G \vee H$ is well-dominated if and only if at least one of $G$ or $H$ is a complete graph and the other factor is well-dominated with domination number at most $2$.
\end{theorem}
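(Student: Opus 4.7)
For the reverse direction, assume $G=K_n$ and $H$ is well-dominated with $\gamma(H)\le 2$. The plan is to classify the minimal dominating sets of $K_n\vee H$ according to the image of their first-coordinate projection. Because $G=K_n$, any two vertices in different $G$-fibers of $K_n\vee H$ are adjacent, so a one-fiber set $\{g_0\}\times D_H$ dominates $K_n\vee H$ iff $D_H$ dominates $H$, and minimality transfers verbatim; this gives a bijection between one-fiber minimal dominating sets of $K_n\vee H$ and minimal dominating sets of $H$. For a minimal dominating set meeting at least two fibers, the observation that any two vertices in distinct fibers already dominate the whole product forces the set to have exactly two elements $\{(g_1,h_1),(g_2,h_2)\}$ with $g_1\ne g_2$; the additional requirement that each singleton $\{(g_i,h_i)\}$ fail to dominate (which, since $g_i$ is universal in $K_n$, is equivalent to $h_i$ not being universal in $H$) then characterizes these two-fiber sets. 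If $H=K_m$ every vertex is universal, so only the one-fiber sets occur and they all have size $1=\gamma(H)$; otherwise the two-fiber sets have size $2$, and well-domination of $K_n\vee H$ amounts to every minimal dominating set of $H$ having size $2$, i.e.\ $\gamma(H)=2$ with $H$ well-dominated.

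For the forward direction, I first establish that every maximal independent set of $G\vee H$ has the rectangular form $S\times T$ where $S$ and $T$ are maximal independent in $G$ and $H$ respectively. The projections of any maximal independent set $I$ of $G\vee H$ are independent in their factors (two elements of $I$ with distinct first coordinates cannot be joined by an edge of $G$, and similarly for $H$), and for each $(g,h)\in\pi_G(I)\times\pi_H(I)$ the vertex $(g,h)$ is non-adjacent to every element of $I$ and therefore lies in $I$ by maximality, giving $I=\pi_G(I)\times\pi_H(I)$; a short additional argument shows both projections are themselves maximal. Since well-dominated implies well-covered and the products $|S|\cdot|T|$ must all equal the common maximal independent set size, $G$ and $H$ are each well-covered and $\gamma(G\vee H)=\alpha(G)\alpha(H)$.

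The crux is to rule out the case where neither factor is complete, i.e.\ $\alpha(G),\alpha(H)\ge 2$. The key inequality is that $S_G\times\{h_0\}$ dominates $G\vee H$ for any total dominating set $S_G$ of $G$, giving $\gamma(G\vee H)\le\gamma_t(G)$ and, symmetrically, $\gamma(G\vee H)\le\gamma_t(H)$. Combined with the general bound $\gamma_t(J)\le 2\alpha(J)$, this handles every subcase with $\max\{\alpha(G),\alpha(H)\}\ge 3$. The remaining subcase $\alpha(G)=\alpha(H)=2$ is dispatched by an auxiliary lemma: any connected well-covered graph $J$ with $\alpha(J)=2$ satisfies $\gamma_t(J)\le 3$. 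To prove it, take a non-adjacent pair $\{u,v\}$ in $J$; if $u,v$ have a common neighbor $y$ then $\{u,v,y\}$ is totally dominating, and otherwise $V(J)\setminus\{u,v\}$ partitions into the cliques $N(u)$ and $N(v)$ (each must be a clique because $\alpha(J)=2$), and any edge between $N(u)$ and $N(v)$, which exists by connectivity, yields a two-element total dominating set. Hence in every subcase $\gamma(G\vee H)<\alpha(G)\alpha(H)$, contradicting well-domination, so up to swapping the factors $G=K_n$; the classification from the first paragraph then forces $H$ to be well-dominated with $\gamma(H)\le 2$. The main obstacle I anticipate is this auxiliary total-domination lemma together with the two-fiber minimality characterization; the rest is a direct synthesis of these observations.
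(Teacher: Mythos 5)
Your proposal is correct, and its skeleton matches the paper's: handle the complete-factor case by classifying minimal dominating sets of $K_n \vee H$ into those confined to one $H$-layer and those of size two meeting two layers (this is exactly the paper's Lemma~\ref{lem:disjunctive-completefactor}), and rule out the case where neither factor is complete by playing $\alpha(G)\alpha(H)$ off against total domination numbers of the factors. The differences are in how the second part is executed, and they are worth noting. The paper's Lemma~\ref{lem:disjwelldom} proves that $\{g\}\times A$ is a \emph{minimal} dominating set of $G\vee H$ when $A$ is a minimal total dominating set of $H$ and $g$ is non-universal, so that well-domination yields the equalities $\alpha(G)\alpha(H)=\gamma_t(G)=\gamma_t(H)$; it then derives $2\le\alpha(G)=\gamma(G)$ and $\gamma_t(G)=2\gamma(G)$ and invokes Lemma~\ref{lma:condnotexist}. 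You only use that $S_G\times\{h_0\}$ \emph{dominates}, which gives the inequality $\gamma(G\vee H)\le\gamma_t(G)\le 2\gamma(G)\le 2\alpha(G)$; combined with $\gamma(G\vee H)=\alpha(G)\alpha(H)$ this forces $\alpha(G)=\alpha(H)=2$ without any minimality argument, which is a mild but genuine simplification. Your closing lemma (a connected graph with $\alpha=2$ has $\gamma_t\le 3$, proved via the common-neighbor dichotomy and the fact that $N(u)$ and $N(v)$ are cliques) is essentially the $\alpha=2$ specialization of the paper's Lemma~\ref{lma:condnotexist}, with the same underlying mechanism (cliques around the independent vertices plus a crossing edge produce a too-small total dominating set); note that well-coveredness is not actually needed in your lemma, only $\alpha=2$ and connectivity. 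Finally, you prove the full rectangular structure $I=\pi_G(I)\times\pi_H(I)$ of maximal independent sets of $G\vee H$, whereas the paper only needs (and cites) the one direction that $I\times J$ is maximal independent; your stronger statement is true but not required. All steps check out, including the two-layer minimality criterion (each coordinate's $H$-component must be non-universal) and the bijection between one-layer minimal dominating sets and minimal dominating sets of $H$.
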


The paper is organized as follows.  In the next section we give the main definitions and also give some results about well-covered and well-dominated graphs that will be needed in the last three sections.  In Section~\ref{sec:smallwd} we provide a characterization of the finite list of connected triangle-free graphs that are well-dominated and have domination number less than $4$.   In Section~\ref{sec:CP} we investigate well-dominated Cartesian products and prove Theorems~\ref{thm:wdCartesian} and \ref{thm:maincp}.  Sections~\ref{sec:DirP} and~\ref{sec:DisjP} are
devoted to proving Theorem~\ref{thm:wd-direct-no-isolatable} and Theorem~\ref{thm:disj-characterization} respectively.

\section{Preliminaries}

Let $G$ be a finite, simple graph with vertex set $V(G)$ and edge set $E(G)$.  For a positive integer $n$ we always assume the vertex set of the complete graph $K_n$ is the set $[n]$, which is defined to be the set of positive integers less than or equal to $n$.  The \emph{girth} of $G$ is the length of its shortest cycle.  An edge incident with a vertex of degree $1$ is a {\it pendant} edge.  A vertex $x$ of $G$ is \emph{isolatable} if there exists an independent set $I$ in $G$ such that $x$ is an isolated vertex in $G-N[I]$.  Note that a vertex of degree $1$ that is incident with a pendant edge is isolatable in $G$ unless the pendant edge is a component of $G$.  The distance in $G$ between vertices $u$ and $v$ is the length of a shortest $u,v$-path in $G$ and is denoted $d_G(u,v)$, or $d(u,v)$ when the context is clear.

We investigate graphs that arise as a \emph{Cartesian product} $X\,\Box\, Y$, a \emph{direct product} $X \times Y$, or a \emph{disjunctive product} $X \vee Y$ of smaller (factor) graphs $X$ and $Y$.  In all three of these graph products the vertex set is $V(X) \times V(Y)$.  The edge sets are defined as follows:
\begin{enumerate}
\item $E(X \,\Box\, Y)=\{(x_1,y_1)(x_2,y_2) \,:\, (x_1=x_2 \text{ and } y_1y_2 \in E(Y)) \text{ or } (y_1=y_2 \text{ and } x_1x_2 \in E(X))\}$.
\item $E(X \times Y)=\{(x_1,y_1)(x_2,y_2) \,:\, x_1x_2\in E(X) \text{ and } y_1y_2 \in E(Y)\}$.
\item $E(X \vee Y)=\{(x_1,y_1)(x_2,y_2) \,:\, x_1x_2\in E(X) \text{ or } y_1y_2 \in E(Y)\}$.
\end{enumerate}
Suppose $x \in V(X)$ and $y \in V(Y)$.  If $\ast \in \{\,\Box\,, \times, \vee\}$, then the \emph{$X$-layer} of $X \ast Y$ containing $(x,y)$ is the subgraph of $X \ast Y$ induced by the set $V(X) \times \{y\}$ and the \emph{$Y$-layer} containing $(x,y)$ is the subgraph induced by $\{x\} \times V(Y)$.  Note that in the Cartesian product or the disjunctive product, an $X$-layer is isomorphic to $X$ while in the direct product an $X$-layer is a totally disconnected graph of order $|V(X)|$.  A similar statement holds for a $Y$-layer.

For $x\in V(G)$ the \emph{open neighborhood} of $x$ is the set $N(x)$ consisting of all vertices in $G$ that are adjacent to $x$ and its \emph{closed neighborhood} is the set defined by $N[x]=N(x)\cup \{x\}$.  Let $S \subseteq V(G)$.  The open (closed) neighborhood of $S$ is the set $N(S)$ ($N[S]$) defined as the union of the open (closed) neighborhoods of the vertices in $S$.  For $v \in S$, the \emph{closed private neighborhood of $v$ with respect to $S$} is the set $\pn[v, S]$ defined by $\pn[v, S] = \{u \in V(G) : N[u] \cap S = \{v\}\}$.  If $\pn[v,S]$ is nonempty, then each vertex in $\pn[v,S]$ is called a \emph{private neighbor} of $v$ with respect to $S$.  A set $D \subseteq V(G)$ is a \emph{dominating set} of $G$ if $N[D]=V(G)$, and we then say that $D$ \emph{dominates} $G$.  If $\{x\}$ dominates $G$, then $x$ is called a \emph{universal} vertex.  A dominating set $D$ is minimal with respect to set inclusion if and only if $\pn[u,D]\neq \emptyset$ for every $u \in D$.  The smallest cardinality among the minimal dominating sets is called the \emph{domination number} of $G$ and is denoted $\gamma(G)$.  The \emph{upper domination number} of $G$ is the number $\Gamma(G)$, which is the largest cardinality of a minimal dominating set.  If $N(S)=V(G)$, then $S$ is a \emph{total dominating} set.  The smallest and largest cardinalities of  minimal total dominating sets in $G$ are the \emph{total domination number} $\gamma_t(G)$ (respectively, the \emph{upper total domination number} $\Gamma_t(G)$) of $G$.

As defined by Finbow et al.~\cite{wd}, the graph $G$ is \emph{well-dominated} if every minimal dominating set of $G$ has the same cardinality.  That is, $G$ is well-dominated if and only if $\gamma(G)=\Gamma(G)$. The cardinality of the smallest maximal independent set in $G$ is the \emph{independent domination number} of $G$ and is denoted $i(G)$.  The \emph{vertex independence number}, $\alpha(G)$, is the cardinality of a largest independent set of vertices in $G$.  A graph $G$ was defined by Plummer~\cite{p-1970} to be \emph{well-covered} if every maximal independent set of $G$ has cardinality $\alpha(G)$.  Since any maximal independent set in $G$ is also a minimal dominating set, we get the well-known inequality chain
\[\gamma(G) \le i(G) \le \alpha(G) \le \Gamma(G)\,,\]
which immediately gives the following result.

\begin{prop} {\rm \cite[Lemma 1]{wd}} \label{prop:wd-Implies-wc}
Every well-dominated graph is well-covered.
\end{prop}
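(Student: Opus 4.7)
The plan is to exploit the inequality chain
\[\gamma(G) \le i(G) \le \alpha(G) \le \Gamma(G)\]
that was just stated in the excerpt, together with the standard observation that every maximal independent set of a graph is also a minimal dominating set. Once these two ingredients are in place, the conclusion is essentially forced.

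First I would verify (or simply invoke) the fact that every maximal independent set $I$ of $G$ is a minimal dominating set. Domination follows because maximality of $I$ means every vertex outside $I$ has a neighbor in $I$, so $N[I]=V(G)$. Minimality follows because for each $v \in I$, the vertex $v$ itself lies in $\pn[v,I]$: by independence, no other vertex of $I$ is adjacent to $v$, so $N[v]\cap I=\{v\}$.

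Next, suppose $G$ is well-dominated, so $\gamma(G)=\Gamma(G)$. The inequality chain then collapses to equalities throughout; in particular $i(G)=\alpha(G)$. To finish, I would argue directly that every maximal independent set has the same cardinality: if $I$ is any maximal independent set, then by the previous paragraph $I$ is a minimal dominating set, so $\gamma(G) \le |I| \le \Gamma(G)$, and well-domination pinches $|I|=\gamma(G)=\Gamma(G)$. Hence all maximal independent sets have the common cardinality $\alpha(G)$, meaning $G$ is well-covered.

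There is no real obstacle here; the result is a one-line consequence of the inequality chain. The only point that requires care is making sure the implication $|I|=\alpha(G)$ for every maximal independent set $I$ is what ``well-covered'' demands, which matches Plummer's definition recalled just above the proposition.
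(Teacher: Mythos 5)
Your proposal is correct and follows exactly the route the paper intends: the observation that every maximal independent set is a minimal dominating set yields the chain $\gamma(G) \le i(G) \le \alpha(G) \le \Gamma(G)$, and well-domination collapses it so that every maximal independent set has cardinality $\alpha(G)$. You have merely spelled out the details the paper leaves implicit when it says the chain ``immediately gives'' the result.
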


The {\it corona} of  $G$ is denoted by $G \circ K_1$ and is formed by adding a single (new) vertex of degree $1$ adjacent to each vertex of $G$.  It is easy to show that the corona of any graph is well-dominated.  Furthermore, each complete graph is well-dominated and $P_1,P_2,P_4,C_3,C_4,C_5,C_7$ is a complete list of the well-dominated paths and cycles.

The following result follows directly from the definitions.
\begin{lemma} \label{lem:useful}
If $G$ is a well-covered graph and $I$ is any maximal independent set in $G$, then the subgraph of $G$ induced by
$\pn[x,I]$ is a complete subgraph for every $x \in I$.
\end{lemma}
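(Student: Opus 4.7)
The plan is to argue by contradiction. Since $I$ is a maximal independent set in the well-covered graph $G$, we have $|I|=\alpha(G)$, so any independent set of size $|I|+1$ would extend to a maximal independent set larger than $\alpha(G)$, an immediate contradiction. I will therefore show that if $\pn[x,I]$ contained two non-adjacent vertices, such a larger independent set would exist.

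Before starting the case analysis I would record the elementary observation that $I \cap \pn[x,I]=\{x\}$. Indeed, if $u\in I\cap \pn[x,I]$, then $u\in N[u]\cap I=\{x\}$, giving $u=x$. Consequently every vertex $w\in \pn[x,I]\setminus\{x\}$ lies outside $I$ and satisfies $N[w]\cap I=\{x\}$, so $w$ is adjacent to $x$ and to no other vertex of $I$.

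Now let $u,v$ be distinct elements of $\pn[x,I]$; the goal is to show $uv\in E(G)$. If one of them, say $u$, equals $x$, then $v\in \pn[x,I]\setminus\{x\}$ is adjacent to $x$ by the observation above, so we are done. Otherwise both $u,v\in\pn[x,I]\setminus\{x\}$; assume for contradiction that $uv\notin E(G)$. Then $(I\setminus\{x\})\cup\{u,v\}$ is an independent set of cardinality $|I|+1$, since no member of $\{u,v\}$ has a neighbor in $I\setminus\{x\}$ and $u,v$ are non-adjacent by assumption. This contradicts $\alpha(G)=|I|$.

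The only care required is in correctly parsing the private-neighbor definition and noticing that the swap "remove $x$, insert $u$ and $v$" always increases the size of an independent set when $u$ and $v$ are non-adjacent private neighbors of $x$. No deeper combinatorial obstacle is present, which is consistent with the result's role as a preliminary lemma used throughout the later sections.
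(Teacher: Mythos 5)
Your proof is correct: the exchange argument (replace $x$ by two non-adjacent private neighbors $u,v$ to get an independent set of size $\alpha(G)+1$) is exactly the intended reasoning, and you use well-coveredness precisely where it is needed, namely to guarantee $|I|=\alpha(G)$. The paper omits the proof, stating only that the lemma ``follows directly from the definitions,'' and your argument is the natural way to fill in that omission.
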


In \cite{girth5}, Finbow et al.  classified all connected well-covered graphs of girth at least $5$. In doing so they
defined a class of graphs, denoted $\mathcal{PC}$, as follows. A $5$-cycle $C$ of a graph $G$ is called \textit{basic}
if $C$ does not contain two adjacent vertices of degree three or more in $G$. A graph $G$ belongs to $\mathcal{PC}$ if $V(G)$
can be partitioned into two subsets $P$ and $C$ where
\begin{itemize}
\item $P$ contains the vertices incident with a pendant edge, and the pendant edges form a perfect matching of $P$; and
\item $C$ contains the vertices of the basic $5$-cycles, and the basic $5$-cycles form a partition of $C$.
\end{itemize}

A well-covered graph $G$ in $\mathcal{PC}$ of girth at least $5$ need not have any basic $5$-cycles.  In this case it is clear that $G$ is the corona of the graph $H$ obtained by deleting all the vertices of degree $1$ from $G$.

\begin{theorem}{\rm \cite{girth5}} \label{thm:girth5}
Let $G$ be a connected well-covered graph of girth at least $5$. Then $G$ is in $\mathcal{PC}$ or $G$ is isomorphic to one
of $K_1$, $C_7$, $P_{10}$, $P_{13}$, $Q_{13}$, or $P_{14}$.
\end{theorem}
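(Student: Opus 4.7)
My plan is to exploit the combination of well-coveredness and girth at least $5$ to pin down the structure around each vertex, then globalize to a $\mathcal{PC}$-decomposition or force $G$ into the short list of sporadic examples.

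First, I would establish a strong local constraint. By Lemma~\ref{lem:useful}, for every maximal independent set $I$ and every $x \in I$, the subgraph induced by $\pn[x,I]$ is a clique; since $G$ is triangle-free (girth $\ge 5$), this clique has order at most $2$. So each $x \in I$ is of exactly one of two local types: either $\pn[x,I]$ is a single vertex, or $\pn[x,I] = \{y_1, y_2\}$ is an edge of $G$ disjoint from $x$. These are the only two templates that need to be stitched together.

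Next, I would organize the vertex set into $P$, the set of vertices of $G$ incident with a pendant edge, and $C = V(G) \setminus P$. A standard swap argument rules out a vertex being the support of two pendants: if $u$ were adjacent to pendant leaves $a$ and $b$, the maximal independent sets extending $\{a,b\}$ and extending $\{u\}$ would have different sizes, violating Proposition~\ref{prop:wd-Implies-wc}. Exchanging any pendant leaf for its support, together with girth $\ge 5$ preventing short cycles through pendants, then forces the pendant edges to form a perfect matching of the subgraph induced by $P$.

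For $v \in C$, I would choose a maximal independent set $I$ containing $v$ with $\pn[v,I] = \{y_1, y_2\}$ an edge. Swapping $v$ out of $I$ and $y_1$ in, and re-extending to a maximal independent set $I_1$ of the same size, forces a new vertex $v'$ into $I_1$ that, together with $v, y_1, y_2$, closes a short cycle. The girth condition pins its length at exactly $5$, and a further exchange argument rules out two adjacent vertices of degree $\ge 3$ on this cycle, since otherwise yet another swap breaks well-coveredness. Hence $v$ lies on a basic $5$-cycle, and such cycles partition $C$.

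The main obstacle, and the place where I expect the bulk of the bookkeeping to live, is ruling out hybrid configurations in which a basic $5$-cycle and a pendant pair, or two basic $5$-cycles, are joined by a short bridge. These configurations can be locally well-covered yet globally inconsistent, and a finite case analysis on the possible bridge lengths, sharply bounded by the girth condition, collapses them into exactly the six sporadic graphs $K_1$, $C_7$, $P_{10}$, $P_{13}$, $Q_{13}$, and $P_{14}$ listed in the statement, following the detailed argument of~\cite{girth5}.
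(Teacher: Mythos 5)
This theorem is not proved in the paper at all: it is quoted, with the citation~\cite{girth5}, from Finbow, Hartnell and Nowakowski, so there is no internal proof to compare your attempt against. Judged on its own terms, your proposal is a roadmap of the original authors' strategy rather than a proof, and it has concrete gaps. A small one first: since $I$ is independent and $x\in I$, we always have $x\in\pn[x,I]$, so in the triangle-free setting the two-element case is $\pn[x,I]=\{x,y\}$ with $xy\in E(G)$, not ``an edge of $G$ disjoint from $x$''; also, for a given $v\in C$ a maximal independent set $I$ with $|\pn[v,I]|=2$ need not exist, so that choice requires justification.

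The serious gap is structural. Your fourth paragraph claims that for every vertex $v\notin P$ a swap argument, combined with the girth condition, forces $v$ onto a basic $5$-cycle. If that argument were valid as stated, it would prove that \emph{every} connected well-covered graph of girth at least $5$ lies in $\mathcal{PC}$, which is false: $C_7$ contains no $5$-cycle whatsoever, and $P_{10}$, $P_{13}$, $Q_{13}$, $P_{14}$ likewise fail the $\mathcal{PC}$ decomposition. These six exceptional graphs are exactly the cases where your local-to-global step breaks down; they are not ``hybrid configurations in which a basic $5$-cycle and a pendant pair, or two basic $5$-cycles, are joined by a short bridge,'' so the final paragraph mischaracterizes the case analysis that actually has to be performed. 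Moreover, that case analysis --- the heart of the theorem, namely showing the exceptions are precisely these six graphs --- is not carried out but deferred to ``the detailed argument of~\cite{girth5},'' i.e., to the very result being proved. As written, the proposal assumes its hardest part.
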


The well-covered graph $P_{10}$ is shown in Figure~\ref{fig:twoorphans1}.  The other three special well-covered graphs,
$P_{13}$, $P_{14}$ and $Q_{13}$, from Theorem~\ref{thm:girth5} are not well-dominated and hence will not concern us here.

\begin{figure}[ht!]
\tikzstyle{every node}=[circle, draw, fill=black!0, inner sep=0pt,minimum width=.2cm]
\begin{center}
\begin{tikzpicture}[thick,scale=.6]
  \draw(0,0) { 
    +(1.33,4.67) -- +(0.00,3.33)
    +(2.67,3.33) -- +(1.33,4.67)
    +(0.00,3.33) -- +(0.00,1.33)
    +(0.00,1.33) -- +(2.67,1.33)
    +(2.67,1.33) -- +(2.67,3.33)
    +(2.67,3.33) -- +(5.33,3.33)
    +(5.33,3.33) -- +(5.33,1.33)
    +(5.33,1.33) -- +(4.00,0.00)
    +(4.00,0.00) -- +(2.67,1.33)
    +(1.33,4.67) -- +(6.67,4.67)
    +(6.67,4.67) -- +(6.67,0.00)
    +(6.67,0.00) -- +(4.00,0.00)
    +(1.33,4.67) node{}
    +(0.00,3.33) node{}
    +(2.67,3.33) node{}
    +(0.00,1.33) node{}
    +(2.67,1.33) node{}
    +(5.33,1.33) node{}
    +(5.33,3.33) node{}
    +(4.00,0.00) node{}
    +(6.67,4.67) node{}
    +(6.67,0.00) node{}
    };
\end{tikzpicture}

\end{center}
\caption{$P_{10}$}
\label{fig:twoorphans1}
\end{figure}

In \cite{wd} they show precisely which of those well-covered graphs of girth at least $5$ are also well-dominated.

\begin{theorem}{\rm \cite{wd}} \label{thm:wdgirthatleast5}
Let $G$ be a connected, well-dominated graph of girth at least $5$. Then $G \in \mathcal{PC}$ if and only if for every
pair of basic $5$-cycles there is either no edge joining them, exactly two vertex-disjoint edges joining them, or exactly
four edges joining them. If $G \not\in \mathcal{PC}$, then $G$ is isomorphic to $K_1$, $C_7$, or $P_{10}$.
\end{theorem}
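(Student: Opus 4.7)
The plan is to combine Proposition~\ref{prop:wd-Implies-wc} with Theorem~\ref{thm:girth5}: since every well-dominated graph is well-covered, a connected well-dominated graph $G$ of girth at least $5$ either belongs to $\mathcal{PC}$ or is isomorphic to one of $K_1, C_7, P_{10}, P_{13}, Q_{13}, P_{14}$. The argument therefore splits into two parts, namely (i) settling each of the six exceptional graphs and (ii) characterizing the well-dominated members of $\mathcal{PC}$.

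For (i), $K_1$ and $C_7$ are well-dominated by the preliminary list of small well-dominated graphs, and for $P_{10}$ (Figure~\ref{fig:twoorphans1}) I would directly verify well-domination by a short case check: show that every minimal dominating set has exactly $4$ vertices by using the symmetry of $P_{10}$ and the constraint that each degree-$3$ vertex on an outer $5$-cycle either lies in $D$ or has a neighbor in $D$ that supplies a private neighbor. For each of $P_{13}$, $P_{14}$, and $Q_{13}$, I would exhibit one maximal independent set and one strictly smaller minimal dominating set, so that $\gamma < \Gamma$ and these graphs are not well-dominated.

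For (ii), let $G \in \mathcal{PC}$ have vertex partition $V(G) = P \cup C$, where $P$ carries a perfect matching of pendant edges and $C$ is partitioned into basic $5$-cycles $C_1, \ldots, C_t$. Two structural observations drive the proof. First, from each pendant edge $uv$ in $P$ (with $\deg(u) = 1$), any minimal dominating set $D$ contains exactly one endpoint: at least one is needed to dominate $u$, and the private-neighbor condition $\pn[w,D] \neq \emptyset$ for all $w \in D$ rules out including both. Second, on each basic $5$-cycle $C_i$, the condition that no two adjacent vertices of $C_i$ have degree $\geq 3$ forces $|D \cap V(C_i)| = 2$ in every minimal dominating set of $G$; this uses Lemma~\ref{lem:useful} together with the fact that a high-degree vertex $v \in C_i$ can only have $\pn[v,D] \subseteq V(C_i)$. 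Summing yields $|D| = |P|/2 + 2t$ whenever such a selection extends to a minimal dominating set of $G$, so the question reduces to when the pair-wise cross-edge patterns between basic $5$-cycles permit this count to be simultaneously realized and not beaten below.

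Both directions of the characterization are then established by analyzing cross-edges between a pair $C_i, C_j$. If $C_i$ and $C_j$ are joined by $0$ edges, two vertex-disjoint edges, or $4$ edges, I would check that any choice of two vertices per basic $5$-cycle plus one endpoint per pendant edge extends to a minimal dominating set of size $|P|/2 + 2t$. Conversely, if $C_i$ and $C_j$ are joined by $1$ edge, by $3$ edges, or by $2$ edges sharing a common endpoint, I would construct two minimal dominating sets of different cardinalities by exploiting the asymmetry: one set chooses a pair on $C_i$ whose vertices dominate all cross-edges while leaving a standard pair on $C_j$ with private neighbors intact; the other set is forced by the private-neighbor condition to include an extra vertex on $C_j$ (or on $C_i$), breaking the count. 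The main obstacle is this forward direction — the exhaustive case analysis of forbidden cross-edge configurations and the careful bookkeeping of private neighbors in each case, ruling out alternative selections that might restore the count. Once these cases are handled, the theorem follows from the dichotomy supplied by Theorem~\ref{thm:girth5}.
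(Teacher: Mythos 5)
This statement is not proved in the paper at all: it is quoted from Finbow, Hartnell and Nowakowski~\cite{wd}, so there is no internal proof to compare your attempt against, and it must be judged on its own terms. Your outer frame is sensible --- Proposition~\ref{prop:wd-Implies-wc} together with Theorem~\ref{thm:girth5} does reduce the problem to the six exceptional graphs plus the members of $\mathcal{PC}$, the exceptional graphs can indeed be settled by direct checks, and your observation that a minimal dominating set contains exactly one endpoint of each pendant edge is correct.

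The attempt breaks down, however, at the second structural claim, which is the heart of the matter: it is false that in every $G \in \mathcal{PC}$ every minimal dominating set $D$ satisfies $|D \cap V(C_i)| = 2$ for each basic $5$-cycle $C_i$. If that were true, then combined with the pendant-edge count every minimal dominating set would have cardinality $|P|/2 + 2t$, so every well-covered member of $\mathcal{PC}$ would be well-dominated and the cross-edge condition would be vacuous --- contradicting the theorem you are proving and your own converse direction, where you plan to exhibit minimal dominating sets of different sizes when the condition fails. Concretely, let $G$ consist of two $5$-cycles $a_1a_2a_3a_4a_5$ and $b_1b_2b_3b_4b_5$ joined by the single edge $a_1b_1$; this graph lies in $\mathcal{PC}$ (with $P=\emptyset$), has girth $5$, and both cycles are basic. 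The set $\{a_1,a_3,b_1,b_3\}$ is a minimal dominating set of size $4$, while $\{a_3,a_4,b_1,b_2,b_5\}$ is a minimal dominating set of size $5$ that meets the second cycle in three vertices: $a_2,a_5,b_3,b_4$ are private neighbors of $a_3,a_4,b_2,b_5$ respectively, and $a_1$ --- a vertex of the \emph{other} cycle --- is the private neighbor of $b_1$. Such cross-cycle private neighbors are precisely the mechanism that the theorem's edge condition is designed to exclude, and controlling them is the real work; Lemma~\ref{lem:useful} cannot substitute for it, since that lemma concerns maximal independent sets of a well-covered graph and says nothing about arbitrary minimal dominating sets. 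Hence the essential content of the theorem --- that the count $|P|/2+2t$ is forced exactly when every pair of basic $5$-cycles is joined by no edge, two vertex-disjoint edges, or four edges, and that a deviating minimal dominating set can be constructed otherwise --- remains unproved in your outline, which at those points consists of statements of intent rather than arguments.
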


\section{Triangle-free well-dominated graphs} \label{sec:smallwd}

In this section we determine the finite set of connected, well-dominated, triangle-free graphs whose domination number is at most $3$.
It is clear that if $G$ is well-dominated with domination number $1$, then $G$ is a complete graph.  Therefore, we begin with the study of graphs with domination number $2$.

\begin{theorem} \label{thm:dom2girth4} If $G$ is a connected, well-dominated graph of girth at least $4$ and domination
number $2$, then $G \in \{P_4, C_4 ,C_5\}$.
\end{theorem}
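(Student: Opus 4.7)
The plan is to first pin down the invariants forced by well-domination and then extract the structure of \(G\) from a minimum dominating pair. Since \(\gamma(G)=2\) and \(G\) is well-dominated, \(\Gamma(G)=2\). By Proposition~\ref{prop:wd-Implies-wc} the graph \(G\) is also well-covered, so feeding \(\gamma(G)=\Gamma(G)=2\) into the chain \(\gamma(G)\le i(G)\le \alpha(G)\le \Gamma(G)\) forces \(\alpha(G)=2\). In other words, \(G\) contains no independent triple, and every independent pair is already a maximal independent set.

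I would then fix a minimum dominating set \(\{u,v\}\) and split the argument on whether or not \(u\sim v\). In either case, partition \(V(G)\setminus\{u,v\}\) as \(A=N(u)\setminus N[v]\), \(B=N(v)\setminus N[u]\), and \(C=N(u)\cap N(v)\). The triangle-free hypothesis forces \(A\), \(B\), and \(C\) each to be independent, and forces \(C=\emptyset\) whenever \(u\sim v\), since a common neighbor would close a triangle with the edge \(uv\).

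The heart of the argument is a sharp bound on \(|A|,|B|,|C|\) coming from \(\alpha(G)=2\). For any \(a\in A\) the pair \(\{a,v\}\) is independent, and a second vertex of \(A\) would create an independent triple with \(v\), so \(|A|\le 1\); symmetrically \(|B|\le 1\). Independence of \(C\) gives \(|C|\le 2\), and a short additional argument (using triangle-freeness to rule out \(a\sim c\) for \(a\in A\) and \(c\in C\), then extending \(\{a,c\}\) to forbid a second vertex of \(C\)) refines this to \(|C|\le 1\) as soon as \(A\cup B\ne\emptyset\). In particular \(|V(G)|\le 5\), so only a handful of configurations remain.

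To finish, I would enumerate those remaining configurations, discarding anything disconnected, triangle-containing, or with \(\gamma\ne 2\). When \(u\sim v\), minimality of \(\{u,v\}\) requires \(A\) and \(B\) to be nonempty singletons \(\{a\},\{b\}\), yielding \(C_4\) if \(a\sim b\) and \(P_4\) otherwise. When \(u\not\sim v\), the surviving possibilities are: \(A=B=\emptyset\) with \(|C|=2\), giving \(C_4\); one of \(A,B\) a singleton together with \(|C|=1\), giving \(P_4\); \(|A|=|B|=1\) with \(|C|=0\) and \(a\sim b\) forced by connectivity, again giving \(P_4\); and \(|A|=|B|=|C|=1\), where \(a\sim b\) is forced to avoid the independent triple \(\{a,b,c\}\), producing the \(5\)-cycle \(u,a,b,v,c,u\). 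The main bookkeeping concern is verifying that no borderline graph (such as \(P_5\) or \(K_{2,3}\), which both satisfy \(\gamma=2\)) slips through; each such graph contains an explicit independent triple and is killed by the \(\alpha(G)=2\) filter.
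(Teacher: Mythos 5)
Your proposal is correct and follows essentially the same route as the paper: a direct case analysis of the neighborhood structure around a dominating pair, driven by $\alpha(G)=2$ (obtained from the inequality chain) and triangle-freeness. The only cosmetic difference is that the paper starts from an \emph{independent} dominating pair $S=\{u,v\}$ (guaranteed to dominate by well-coveredness) and organizes the cases via the private neighborhoods $\pn[u,S]$ and $\pn[v,S]$ --- which coincide with your $A\cup\{u\}$ and $B\cup\{v\}$ --- whereas you allow an adjacent pair and therefore carry one extra, easily dispatched case.
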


\begin{proof}  Suppose $G$ is a connected, well-dominated graph of girth at least $4$ such that $\gamma(G)=2$.
Let $S = \{u,v\}$ be an independent set in $G$.  Since $G$ is well-covered by Proposition~\ref{prop:wd-Implies-wc}, $S$ is also a minimum dominating set of $G$ and thus $N[S]=V(G)$.  Moreover, both $\pn[u, S]$ and $\pn[v, S]$ induce a clique in $G$ by Lemma~\ref{lem:useful}.  Since $G$ is triangle-free,
it follows that $|\pn[u, S]| \le 2$ and $|\pn[v, S]| \le 2$.

Suppose first that $\pn[v, S] = \{v\}$ and $\pn[u, S] = \{u\}$. Since $G$ is connected, $N(u)=N(v) \ne \emptyset$.
Since $G$ is triangle-free, $N(u)$ is independent. Thus, $|N(u)| \le \alpha(G) = 2$. However, if $|N(u)| = 1$, then $G = P_3$,
which is not a well-dominated graph. Therefore, $|N(u)|= 2$ and $G = C_4$.

Next, suppose that $\pn[v, S] = \{v\}$ and $\pn[u, S] = \{u, w\}$. Since $G$ is connected and triangle-free, $N(u)$
and $N(v)$ are  independent sets and $N(u)\cap N(v) \ne \emptyset$.  Thus $|N(u) \cap N(v)| \le \alpha(G) = 2$.  If
$|N(u) \cap N(v)| = 1$, then $G = P_4$. However, if $|N(u) \cap N(v)| = 2$, then $N(u) \cup N(v)$ is an independent set
of size $3$, which is a contradiction.

Finally, suppose that $\pn[v, S] = \{v, z\}$ and $\pn[u, S] = \{u, w\}$. If $N(u) \cap N(v) = \emptyset$, then
$wz \in E(G)$ and $G = P_4$. If  $N(u) \cap N(v) \ne \emptyset$, then $N(u) \cup N(v)$ is an independent set of size
at least $3$ unless $zw \in E(G)$. In this case, $G = C_5$.
\end{proof}

Next we  classify all connected, well-dominated graphs with domination number $3$ and girth at least $4$.  For this purpose let
$\mathcal{F}_1$ be the set of four graphs $H_1,H_2,H_3$ and $H_4$ depicted in Figure~\ref{fig:dom3}.

 \begin{theorem} \label{thm:dom3girth4}
 If $G$ is a connected well-dominated graph of girth at least $4$ and domination number $3$, then $G\in \{P_3 \circ K_1, C_7\}$ or
 $G \in \mathcal{F}_1$.
 \end{theorem}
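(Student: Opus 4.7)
The plan is to exploit the parameter constraints forced by well-domination and then separate into a girth-at-least-$5$ case, handled via the known structure theorem for well-dominated graphs of large girth, and a girth-$4$ case, handled by direct structural analysis from a fixed maximum independent set.

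First, since $G$ is well-dominated with $\gamma(G)=3$, Proposition~\ref{prop:wd-Implies-wc} together with the chain $\gamma(G)\le i(G)\le \alpha(G)\le \Gamma(G)$ forces $\gamma(G)=i(G)=\alpha(G)=\Gamma(G)=3$. Because $G$ is triangle-free, every open neighborhood $N(v)$ is independent, so $\Delta(G)\le \alpha(G)=3$. I then split on the girth of $G$.

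For girth at least $5$, I apply Theorem~\ref{thm:wdgirthatleast5}. Of the sporadic graphs $K_1,C_7,P_{10}$, only $C_7$ has $\gamma=3$. Otherwise $G\in\mathcal{PC}$, so $V(G)=P\cup C$ with $P$ carrying a perfect matching of pendant edges and $C$ partitioned into $k$ basic $5$-cycles. A minimum dominating set needs at least two vertices from each basic $5$-cycle and one per pendant edge, giving $\gamma(G)=2k+m$ where $2m=|P|$. Setting $2k+m=3$ leaves $(k,m)=(0,3)$ or $(1,1)$. In the first case $G$ is the corona $H\circ K_1$ of a connected, triangle-free $3$-vertex graph $H$, so $H=P_3$ and $G=P_3\circ K_1$. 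In the second case $G$ consists of a basic $5$-cycle together with a pendant edge whose non-leaf endpoint is adjacent to an independent subset of the $5$-cycle; up to isomorphism these configurations produce the girth-$5$ members of $\mathcal{F}_1$.

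For girth exactly $4$, I fix a maximum independent set $S=\{u,v,w\}$, which by the parameter equalities is also a minimum dominating set. By Lemma~\ref{lem:useful} each $\pn[x,S]$ induces a clique, and triangle-freeness then gives $|\pn[x,S]|\in\{1,2\}$. Letting $R$ be the set of vertices having at least two neighbors in $S$, a short triangle-avoidance argument (any edge inside $R$ would produce a triangle through a common member of $S$) shows $R$ is independent with $|R|\le 3$, so $|V(G)|\le 9$. The existence of a $4$-cycle forces some pair of vertices of $S$ to share at least two common neighbors in $R$, giving a concrete initial fragment. I would then enumerate configurations according to the multiset $(|\pn[u,S]|,|\pn[v,S]|,|\pn[w,S]|)$ together with the bipartite incidence pattern of $R$ against pairs from $S$, and for each candidate either identify it with a remaining $H_i\in\mathcal{F}_1$ or exhibit a minimal dominating set of size different from $3$ to witness that $G$ fails to be well-dominated.

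The main obstacle is the bookkeeping in the girth-$4$ enumeration: although the order bound $|V(G)|\le 9$ makes the universe of candidates finite, each rejected configuration requires an explicit non-well-dominatedness witness (typically a minimal dominating set of size $4$ or an incomplete private-neighborhood check), and one must also verify that distinct configurations do not secretly describe the same graph arising from a different choice of the starting set $S$.
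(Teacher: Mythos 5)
Your parameter setup and your treatment of the girth-at-least-$5$ case are sound and essentially coincide with the paper's: both invoke Theorem~\ref{thm:wdgirthatleast5}, discard $K_1$ and $P_{10}$ on domination-number grounds, and reduce the $\mathcal{PC}$ case to three pendant edges (giving $P_3\circ K_1$) or one basic $5$-cycle plus one pendant edge (giving $H_1$; note that the girth condition forces the support vertex to meet the $5$-cycle in exactly one vertex, since attaching it to two vertices of a $5$-cycle creates a $4$-cycle). Your bound $|V(G)|\le 9$ in the girth-$4$ case is also correct: the external private neighbors contribute at most $3$ vertices, and $R$ is independent (any two of its members share a neighbor in $S$ by pigeonhole) hence $|R|\le\alpha(G)=3$.

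The girth-$4$ case, however, contains a genuine gap, in two respects. First, the structural claim you use to seed the enumeration --- that the presence of a $4$-cycle forces some pair of vertices of $S$ to have at least two common neighbors in $R$ --- is false. The graph $H_3$ itself is a counterexample: taking $S$ to be the maximum independent set consisting of the two degree-$2$ vertices at distance $2$ from each other on the ``pendant'' side together with the far degree-$2$ vertex, one finds $|R|=2$ and every pair of vertices of $S$ shares at most one common neighbor, yet $H_3$ contains a $4$-cycle (lying partly among private neighbors). What your triangle-freeness argument actually yields is only that a $4$-cycle cannot consist entirely of private neighbors of $S$; it may still meet $S\cup R$ in as little as one or two vertices, so there is no single ``concrete initial fragment'' to branch from. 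Second, and more fundamentally, the enumeration over incidence patterns --- which is where all the content of the theorem lives, including producing the size-$4$ minimal dominating sets or independent sets that kill the non-examples --- is announced but not carried out; you acknowledge this yourself as the main obstacle. The paper organizes this case differently (splitting on whether some vertex lies at distance at least $2$ from a fixed $4$-cycle, then on the degree of that vertex, and finally on the adjacency pattern of the remaining vertices to the $4$-cycle) and executes every branch explicitly. Until the finite check is actually performed, with each discarded configuration certified by an explicit witness, the proof is incomplete.
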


 \begin{proof}
 Let $G$ be a connected, well-dominated graph of girth at least $4$ such that $\gamma(G)=3$.  This implies that $G$ is also well-covered
 with $\alpha(G)=3$, which in turn implies that $\Delta(G) \le 3$.  Suppose first that the girth of $G$ is at least $5$.  Since $\gamma(P_{10})=4$ and $\gamma(G)=3$, it  follows by Theorem~\ref{thm:wdgirthatleast5} that $G \in \mathcal{PC}$ or $G = C_7$.
 If $G \in \mathcal{PC}$, then $G$ contains either one basic $5$-cycle and one pendant edge or $G$ contains three pendant edges.  Therefore,  $G \in\{P_3 \circ K_1, C_7, H_1\}$, and the conclusion holds.  Hence, we shall assume $G$ contains a $4$-cycle $u_1u_2u_3u_4u_1$.

 Suppose first that there exists a vertex $x$ such that $d(x, u_i) \ge 2$ for each $1 \le i \le 4$. Suppose that $G- N[x]$ contains at least $5$ vertices and let $z \in V(G) - (N[x] \cup \{u_1, u_2, u_3, u_4\})$. If $z$ is adjacent to some vertex in $\{u_1, u_2, u_3, u_4\}$, then we may assume that $z$ is adjacent to $u_1$. In any case,  $\{z, u_2, u_4, x\}$ is an  independent set of size $4$, which is a contradiction. Therefore, we may assume $G - N[x] = u_1u_2u_3u_4u_1$.   Since $\Delta(G) \le 3$,  the vertex $x$ has degree at most $3$. Suppose first that $\deg(x) = 1$ and $N(x) = \{w\}$. Since
 $G$ is connected, we may assume with no loss of generality that $wu_3 \in E(G)$. Note that $N(w) = \{x, u_1, u_3\}$ or $N(w) = \{x, u_3\}$.
 In either case, $\gamma(G) = 2$,  which is a contradiction.

 Next, suppose $\deg(x) = 2$ and let $N(x) = \{w_1, w_2\}$. We may assume that $w_1u_4 \in E(G)$. Notice that $\{w_1, w_2, u_1, u_3\}$ is not an independent set.   Therefore, $w_2u_1$ or $w_2u_3$ is an edge in $G$. With no loss of generality, we may assume $w_2u_3 \in E(G)$. If we have identified all the edges of $G$, then  $G = H_3$. If $w_2u_1$ is the only other edge in $G$, then $G = H_4$. If $w_2u_1\in E(G)$, then the only other edge that can be in $G$ is $w_1u_2$.  However, in  this case, $\gamma(G) = 2$, which is a contradiction. So we shall assume that $w_2u_3 \in E(G)$ and $w_2u_1 \not\in E(G)$. The only other  additional edge that $G$  may contain is $w_1u_2$ which creates a graph that is isomorphic to $H_4$.

 Finally, suppose $\deg(x) = 3$ and let $N(x) = \{w_1, w_2, w_3\}$. Without loss of generality, we may assume $w_1u_4 \in E(G)$. Since $G$ is well-covered,  $\deg(w_2) \ge 2$ or $\deg(w_3) \ge 2$ since no well-covered graph contains a vertex with more than one neighbor of degree $1$. We may assume that $\deg(w_2) \ge 2$. Note that assuming  $w_2u_3 \in E(G)$ is equivalent to assuming that $w_2u_1 \in E(G)$. Therefore, we consider two possibilities: $w_2u_3 \in E(G)$ or $w_2u_2 \in E(G)$.

 Suppose first that $w_2u_3 \in E(G)$. Note that $\{u_1, w_1, w_2, w_3\}$ is not an independent set and $\{u_2, w_1, w_2, w_3\}$ is not an independent set.  Therefore, either $u_1w_2$ or $u_1w_3$ is in $E(G)$, and either $u_2w_1$ or $u_2w_3$ is in $E(G)$. Suppose first that $u_1w_2$ and $u_2w_1$ are edges in $G$.  Note that all vertices other than $w_3$ have degree $3$ so $G$ contains no other edges. However, it now follows that $\{u_1, u_3, w_1, w_3\}$ is an independent set, which is a  contradiction. Therefore, this case cannot occur. If $u_1w_3$ and $u_2w_1$ are in $E(G)$, then $\{u_2, u_4, w_2, w_3\}$ is an independent set in $G$, another  contradiction. We conclude that $u_2w_1 \not\in E(G)$, and so $u_2w_3 \in E(G)$, which in turn implies that $u_1w_2 \in E(G)$. However, in this case,
 $\{u_1, u_3, w_1, w_3\}$ is an independent set in $G$. Thus, it must be that $w_2u_3 \not\in E(G)$.

 Therefore, we may assume that $w_2u_2 \in E(G)$. Since $\{u_1, u_3, w_1, w_2, w_3\}$ is not an independent set, it follows that $w_3u_1$ and $w_3u_3$ are  edges in $G$. However, $\{u_1, u_3, w_2, w_1\}$ is now an independent set in $G$, another contradiction.

 Having exhausted all possibilities when $G$ contains a vertex $x$ that is distance $2$ from $u_1u_2u_3u_4u_1$, we now consider when every vertex of  $V(G) - \{u_1, u_2, u_3, u_4\}$ is adjacent to $u_i$ for some $i \in [4]$. Let $V_i $ be the set of vertices in $V(G) - \{u_1, u_2, u_3, u_4\}$ adjacent  to $u_i$. Since $\deg(v) \le 3$ for each $v \in V(G)$, $|V_i| \le 1$. We may assume that $|V_1| = 1$ and we write $V_1 = \{v_1\}$. If $|V_2| = 0 = |V_4|$,  then $\gamma(G) = 2$. So we may assume $V_2 = \{v_2\}$. Note that $v_1 \ne v_2$. Moreover, if $|V_3| = 0 =|V_4|$, then $\gamma(G) = 2$. So we may assume  $V_4 = \{v_4\}$ and we know that $v_4 \ne v_1$. However, it may be the case that $v_4 = v_2$. Suppose first that $v_4 \ne v_2$. Since $\{v_2, v_4, u_1, u_3\}$  is not independent and $G$ is triangle-free, it follows that $v_2v_4 \in E(G)$.  Suppose first that  $V_3 = \emptyset$. If we have identified all the edges in $G$, then $G$ is isomorphic  to $H_2$. If $G$ contains the edge $v_1v_4$, then $\{v_4, u_2\}$ dominates $G$. On the other hand, if  $v_1v_2 \in E(G)$, then $\{v_2, u_4\}$ dominates $G$.
 So we may assume that $V_3 \ne \emptyset.$ Suppose first that $V_3 = \{v_1\}$. Thus, $v_1u_3 \in E(G)$. If we have identified all the edges in $G$, then $G = H_4$.   Otherwise, $G$ also contains $v_1v_2$ or $v_1v_4$. However, if $G$ contains the edge $v_1v_2$, then $\{v_2, u_4\}$ dominates $G$, which is a contradiction.
 Similarly,  $v_1v_4 \not\in E(G)$. So we shall assume that $V_3 = \{v_3\}$ where $v_3 \ne v_1$. Note that $v_1v_3 \in E(G)$ for otherwise
 $\{v_1, v_3, u_2, u_4\}$ is an independent set. If these are the only edges in $G$, then $\{v_1, v_2, v_3, v_4\}$ is a minimal dominating set which is a  contradiction. Therefore, $G$ must contain one of the edges $v_1u_3$, $v_2u_4$, $v_3u_1$, or $v_4u_2$. However, the addition of any one of these edges results in a vertex with degree $4$, which is a contradiction.

 Having exhausted all possibilities for when $v_2 \neq v_4$, we finally consider the case when $v_2 = v_4$. This implies that $V_3 = \{v_3\}$  where $v_3 \ne v_1$, for otherwise $\{u_1, u_4\}$ dominates $G$. However, this case is equivalent to the case where
 $V_i = \{v_i\}$   for $i \in \{1, 2, 4\}$  and $V_3 = \{v_1\}$. Hence, we have identified all connected, well-dominated graphs with girth at least $4$ and domination number $3$.
 \end{proof}

\begin{figure}[ht!]
\begin{center}
\begin{tikzpicture}[auto]

    	\vertex(-1) at (-5,0) [label=above:$$,scale=.75pt,fill=black]{};
	\vertex(-2) at (-5.75, -1)[label=left:$$,scale=.75pt,fill=black]{};
	\vertex(-3) at (-4.25, -1)[label=right:$$,scale=.75pt,fill=black]{};
	\vertex(-4) at (-3, -1)[label=right:$$,scale=.75pt,fill=black]{};
	\vertex(-5) at (-5.75, -2)[label=left:$$,scale=.75pt,fill=black]{};
	\vertex(-6) at (-4.25, -2)[label=below:$$,scale=.75pt,fill=black]{};
	\vertex(-7) at (-3, -2)[label=right:$$,scale=.75pt,fill=black]{};

	\vertex (1) at (.5,0) [label=above:$$,scale=.75pt,fill=black]{};
	\vertex (2) at (-.25, -1) [label=left:$$,scale=.75pt,fill=black]{};
	\vertex (3) at (1.25, -1) [label=right:$$,scale=.75pt,fill=black]{};
	\vertex (4) at (2.5,-1) [label=right:$$,scale=.75pt,fill=black]{};
	\vertex (5) at (-.25, -2) [label=left:$$,scale=.75pt,fill=black]{};
	\vertex (6) at (1.25,-2) [label=below:$$,scale=.75pt,fill=black]{};
	\vertex (7) at (2.5, -2) [label=right:$$,scale=.75pt,fill=black]{};

	\vertex (21) at (-5,-4) [label=above:$$,scale=.75pt,fill=black]{};
	\vertex (22) at (-5.75,-5) [label=above:$$,scale=.75pt,fill=black]{};
	\vertex (23) at (-4.25, -5) [label=above:$$,scale=.75pt,fill=black]{};
	\vertex (24) at (-3,-5) [label=above:$$,scale=.75pt,fill=black]{};
	\vertex (25) at (-5.75, -6) [label=right:$$,scale=.75pt,fill=black]{};
	\vertex (26) at (-4.25,-6) [label=below:$$,scale=.75pt,fill=black]{};
	\vertex (27) at (-3, -6) [label=left:$$,scale=.75pt,fill=black]{};

	\vertex (2A) at (.5,-4) [label=above:$$,scale=.75pt,fill=black]{};
	\vertex (2B) at (-.25,-5) [label=above:$$,scale=.75pt,fill=black]{};
	\vertex (2C) at (1.25, -5) [label=above:$$,scale=.75pt,fill=black]{};
	\vertex (2D) at (2.5,-5) [label=above:$$,scale=.75pt,fill=black]{};
	\vertex (2E) at (-.25, -6) [label=right:$$,scale=.75pt,fill=black]{};
	\vertex (2F) at (1.25,-6) [label=below:$$,scale=.75pt,fill=black]{};
	\vertex (2G) at (2.5, -6) [label=left:$$,scale=.75pt,fill=black]{};

	\node(A) at (-5, -3)[]{(a) $H_1$};
	\node(B) at (1.5, -3)[]{(b) $H_2$};
	\node(E) at (-5, -7)[]{(c) $H_3$};
	\node(G) at (1.5, -7)[]{(d) $H_4$};

	\path
		(1) edge (2)
		(1) edge (3)
		(2) edge (5)
		(3) edge (6)
		(5) edge (6)
		(4) edge (7)
		(7) edge (6)
		(7) edge[bend right=30] (1)
		(21) edge (22)
		(21) edge (23)
		(22) edge (25)
		(23) edge (26)
		(25) edge (26)
		(24) edge (27)
		(27) edge (26)
		(23) edge (24)

		(2A) edge (2B)
		(2A) edge (2C)
		(2B) edge (2E)
		(2C) edge (2F)
		(2E) edge (2F)
		(2D) edge (2G)
		(2G) edge (2F)
		(2D) edge (2C)
		(2G) edge[bend right=30] (2A)

		(-1) edge (-2)
		(-1) edge (-3)
		(-2) edge (-5)
		(-3) edge (-6)
		(-5) edge (-6)
		(-4) edge (-7)
		(-7) edge (-6)

		;

\end{tikzpicture}
\end{center}
\caption{The class $\mathcal{F}_1$}
\label{fig:dom3}
\end{figure}
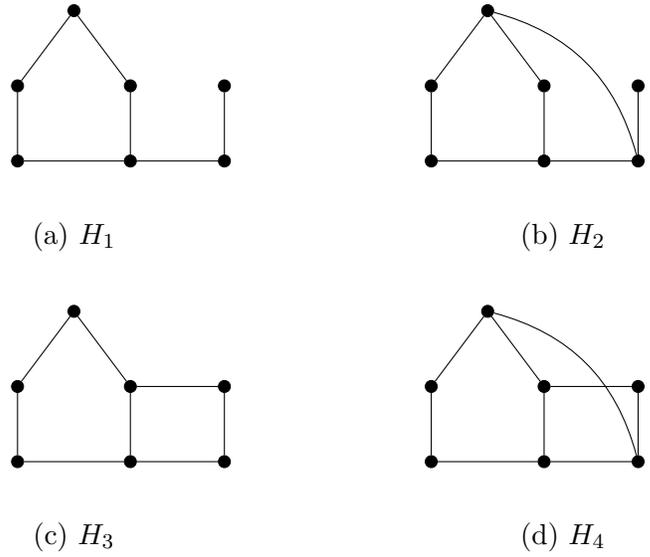

Combining Theorem~\ref{thm:dom2girth4} and Theorem~\ref{thm:dom3girth4} we have shown that a connected, triangle-free graph $G$ such that $\gamma(G) \le 3$ is well-dominated if and only if  $G$ is one of $K_1,K_2,P_4,C_4,C_5,C_7$, the corona of $P_3$, or $G \in \mathcal{F}_1$.

\section{Cartesian Products} \label{sec:CP}
In 2013 Hartnell and Rall proved that if a Cartesian product is well-covered then at least one of the factors is well-covered.
(See~\cite[Theorem 2]{ha-ra-2013}.)  In this section we prove a corresponding result for well-dominated Cartesian products.
Furthermore, we give a complete characterization of triangle-free, well-dominated Cartesian products.
We will need the following concept.   A set $S \subseteq V(G)$ is {\it open irredundant} if $N(u)-N[S-\{u\}] \neq \emptyset$
for every vertex $u\in S$.  That is, $S$ is open irredundant if every vertex of $S$ has a private neighbor (with respect to $S$) that
belongs to $V(G)-S$.  The following result of Bollob\'{a}s and Cockayne will prove useful in the proof of Theorem~\ref{thm:wdCartesian}.

\begin{lemma} {\rm \cite{bc-1992}}
If a graph $G$ has no isolated vertices, then $G$ has a minimum dominating set that is open irredundant.
\end{lemma}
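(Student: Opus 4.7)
The plan is an extremal exchange argument over the collection of all minimum dominating sets of $G$. I would choose such a set $D$ with $|E(G[D])|$, the number of edges of $G$ with both ends in $D$, as large as possible, and then derive a contradiction from the assumption that $D$ is not open irredundant. Because $D$ is minimum it is also minimal, so $\pn[u,D]\neq\emptyset$ for every $u\in D$; the failure of open irredundance would then force some $u\in D$ to satisfy $\pn[u,D]=\{u\}$. In particular, $u\in\pn[u,D]$ is equivalent to $N(u)\cap D=\emptyset$, so this bad vertex $u$ is isolated in the induced subgraph $G[D]$.

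Using the hypothesis that $G$ has no isolated vertices, I would pick a neighbor $x$ of $u$ in $G$. Since $N(u)\cap D=\emptyset$ we have $x\notin D$, and since $\pn[u,D]=\{u\}$ while $x\neq u$, the vertex $x$ cannot be a private neighbor of $u$, so $N[x]\cap D$ must contain some $y\neq u$; as $x\notin D$ this $y$ lies in $N(x)\cap(D-\{u\})$. Set $D' = (D-\{u\})\cup\{x\}$; clearly $|D'|=|D|$. A short check shows that $D'$ still dominates $G$: the vertex $u$ is dominated by $x\in D'$, every other neighbor $w$ of $u$ has a neighbor in $D-\{u\}\subseteq D'$ because $w\notin\pn[u,D]=\{u\}$, and every vertex outside $N[u]$ was already dominated by $D-\{u\}$. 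Hence $D'$ is again a minimum dominating set.

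Finally, I would compare induced-edge counts. Since $u$ is isolated in $G[D]$, deleting it does not change the edge count, so $|E(G[D])| = |E(G[D-\{u\}])|$; on the other hand, $x$ contributes at least the edge $xy$ to $G[D']$, yielding $|E(G[D'])| \ge |E(G[D])|+1$ and contradicting the extremal choice of $D$. The subtle step is the choice of monovariant: the most natural candidate, the number of elements of $D$ possessing an external private neighbor, is not monotone under the exchange, since replacing $u$ by $x$ may simultaneously destroy private neighbors of other members of $D$. Counting induced edges side-steps this difficulty precisely because the bad vertex $u$ contributes nothing to $G[D]$ while the newcomer $x$ is forced to contribute at least one edge to $G[D']$.
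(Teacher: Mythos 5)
Your proof is correct. Note that the paper does not prove this lemma at all---it is imported verbatim from Bollob\'{a}s and Cockayne---so there is no in-paper argument to compare against; what you have written is a complete, self-contained proof of the cited result. The extremal exchange is sound: maximizing $|E(G[D])|$ over minimum dominating sets, a vertex $u$ with $\pn[u,D]=\{u\}$ is isolated in $G[D]$, its neighbor $x$ lies outside $D$ and is not a private neighbor of $u$, so $x$ has a neighbor $y\in D-\{u\}$; the swap $D'=(D-\{u\})\cup\{x\}$ preserves domination (every $v\neq u$ meets $D-\{u\}$ in its closed neighborhood, since otherwise $v\in\pn[u,D]=\{u\}$) and strictly increases the induced edge count because $u$ contributed no edges while $x$ contributes $xy$. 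Your closing remark is also well taken: the more obvious monovariant (the number of vertices of $D$ with an external private neighbor) is not clearly monotone under this exchange, and the induced-edge count is the standard device for making the argument go through.
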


\medskip

\noindent \textbf{Theorem~\ref{thm:wdCartesian}} \emph{
Let $G$ and $H$ be connected graphs. If $G\,\Box\, H$ is well-dominated, then either $G$ or $H$ is well-dominated.
}
\begin{proof}
Suppose $G$ and $H$ are connected graphs and that $G\,\Box\, H$ is well-dominated.  This implies that $G\,\Box\, H$ is well-covered, and by~\cite{ha-ra-2013} either $G$ or $H$ is well-covered. Without loss of generality, assume that $G$ is
well-covered. Note that if $G$ is also well-dominated, then we are done. So we shall assume that $G$ is not well-dominated. Choose a
minimum dominating set $I$ of $G$ such that $I$ is open irredundant. The set $I\times V(H)$ is a dominating set of $G\,\Box\, H$.
Since $I$ is open irredundant in $G$, every vertex of $I \times V(H)$ has a private neighbor in its $G$-layer.  Therefore $I \times V(H)$ is
a minimal dominating set of $G \,\Box\, H$ and hence is a minimum dominating set since $G \,\Box\, H$ is well-dominated.
Next, let $D_G$ be any minimal dominating set of $G$ and let $A=\{ x \in D_G \,\colon\, {\rm pn}[x,D_G]=\{x\}\}$.
We create a minimal dominating set of $G\,\Box\, H$ as follows. Choose a minimal dominating set $D_H$ of $H$ and let $S=(A\times D_H) \cup ((D_G -  A) \times V(H))$.
We claim that $S$ is a minimal dominating set of $G\,\Box\, H$. To see this, note first that $S$ dominates $G\,\Box\, H$.   Furthermore, every vertex in
$D_G -  A$ has a private neighbor (with respect to $D_G$) in $V(G) -  D_G$. Thus, every vertex $(g, h) \in (D_G-A) \times V(H)$ has a
private neighbor with respect to $S$ in its $G$-layer. Next, let $(g,h)\in A\times D_H$.  Since $D_H$ is a minimal dominating set of $H$,
it follows that $h$ has a private neighbor (possibly itself) with respect to $D_H$ in $H$.   Hence, $(g, h)$ has a private neighbor with respect to $S$  in its $H$-layer. Consequently, $S$ is a minimal dominating set of $G\,\Box\, H$, and therefore $|S| =|I||V(H)|$ since $G \,\Box\, H$ is well-dominated. Furthermore,  for any two minimal dominating sets $D_1$ and $D_2$ of $H$,
\[|A\times D_1| + |(D_G -  A)\times V(H)| = |I||V(H)|=|A\times D_2| + |(D_G -  A)\times V(H)|\,.\]
This implies that $|A\times D_1|=|A\times D_2|$.  Thus, either $H$ is well-dominated (and the theorem is proved) or $A = \emptyset$.
If $A = \emptyset$, then the above equation becomes $|D_G||V(H)| = |I||V(H)|$, which implies $|D_G|=|I|=\gamma(G)$.
It follows that $G$ is well-dominated.
\end{proof}

Hartnell et al. proved the following theorem concerning well-covered Cartesian products of graphs having no triangles.

\begin{theorem} {\rm \cite{wellcoverCart}}\label{thm:girth4cart}
If $G$ and $H$ are nontrivial, connected graphs with girth at least $4$ such that $G\,\Box\, H$ is well-covered, then at least one of $G$ or $H$ is the graph $K_2$.
\end{theorem}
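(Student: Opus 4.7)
The plan is to prove the contrapositive: assume both $G$ and $H$ are connected, nontrivial graphs of girth at least $4$ with neither equal to $K_2$, and construct two maximal independent sets of $G\cp H$ of different cardinalities, contradicting well-coveredness.  Since each factor is connected, triangle-free, nontrivial, and not $K_2$, each has at least three vertices and hence a vertex of degree at least two, so each contains an induced $P_3$.  Fix induced paths $u_1,u,u_2$ in $G$ and $v_1,v,v_2$ in $H$.  The $3\times 3$ set $W=\{u_1,u,u_2\}\times\{v_1,v,v_2\}$ in $G\cp H$ induces exactly the grid $P_3\cp P_3$, since triangle-freeness rules out any further edge on $\{u_1,u,u_2\}$ or on $\{v_1,v,v_2\}$.

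I would then define two candidate maximal independent sets by the following recipe.  Let $S_c$ be obtained by starting from the five-vertex set $C=\{(u,v),(u_1,v_1),(u_1,v_2),(u_2,v_1),(u_2,v_2)\}$, independent because the centre is non-adjacent to each corner and the corners pairwise differ in both coordinates, and extending greedily to a maximal independent set of $G\cp H$.  Let $S_+$ be obtained by starting from the cross $P=\{(u_1,v),(u_2,v),(u,v_1),(u,v_2)\}$, also independent by triangle-freeness, and extending in parallel greedily.  A direct check shows that both $C$ and $P$ dominate every vertex of $W$, so that $S_c$ and $S_+$ meet $W$ in exactly $|C|=5$ and $|P|=4$ vertices respectively; the target is to ensure that this $5$-versus-$4$ discrepancy survives the extension to the rest of the product.

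The main obstacle is precisely to control what happens outside $W$.  My first line of attack would be to arrange the induced paths so that the two extensions must agree: whenever one of $G,H$ has a pendant vertex, one can choose $u_1$ (or $v_1$) to be that pendant, which forces every vertex in the corresponding $H$-layer (or $G$-layer) above the pendant to lie in $N[W]$, and a careful pairing argument then shows that the greedy extensions outside $W$ produce identical sets, leaving the gap intact.  When no pendant exists, I would fall back on the Hartnell--Rall theorem~\cite{ha-ra-2013} used in the proof of Theorem~\ref{thm:wdCartesian}, which already forces at least one of $G,H$ to be well-covered; combined with the classification of connected triangle-free well-covered graphs supplied by Theorem~\ref{thm:girth5} and the structural results of Section~\ref{sec:smallwd}, this restricts the well-covered factor to a short list (cycles, coronas, and the sporadic graph $P_{10}$), and the verification that pairing any of these with a second nontrivial triangle-free connected graph produces a non-well-covered product can be handled case by case.
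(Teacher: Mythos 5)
First, note that the paper does not prove this statement itself: Theorem~\ref{thm:girth4cart} is quoted from \cite{wellcoverCart}, so there is no internal proof to compare yours against, and your attempt must stand on its own. Judged that way, it has a genuine gap at its central step. The $5$-versus-$4$ configuration inside $W\cong P_3\cp P_3$ is a clean way to see that $P_3\cp P_3$ itself is not well-covered, but the discrepancy does not automatically survive extension to all of $G\cp H$. The standard way to globalize such a local argument is to check that the two seed sets have the \emph{same closed neighborhood}: then any maximal independent extension of one yields, by swapping seeds, a maximal independent extension of the other with a different cardinality. Here $N[C]\neq N[P]$ in general. Take $G=P_4$ with vertices $p_1p_2p_3p_4$ and $(u_1,u,u_2)=(p_1,p_2,p_3)$; the vertex $(p_4,v)$ is adjacent to $(u_2,v)\in P$ but to no vertex of $C$, since its only neighbors are $(p_3,v)$ and vertices of the form $(p_4,v')$. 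Consequently the regions available for the two greedy extensions differ, and there is no reason the extensions have equal size; the asserted ``careful pairing argument'' is precisely the missing content of the proof, not a routine detail, and the pendant-vertex device does not repair it because neither factor need have a pendant vertex.

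The fallback is also unsupported. Theorem~\ref{thm:girth5} classifies well-covered graphs of girth at least $5$, not girth $4$, and Section~\ref{sec:smallwd} classifies well-\emph{dominated} triangle-free graphs with domination number at most $3$ --- a far smaller class than the connected triangle-free well-covered graphs, which form an infinite family with no classification available in this paper (every corona of a connected triangle-free graph belongs to it, as does $C_4$, $C_7$, $P_{10}$, and much else of girth $4$). So the ``short list'' you invoke does not exist, and even if the well-covered factor were pinned down, the second factor remains an arbitrary nontrivial connected triangle-free graph, so the promised case-by-case verification would still need the very product argument that is missing. As written, the proposal is an honest plan whose hardest step is left open rather than a proof.
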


\begin{lemma} \label{lem:wdprisms}
Suppose $G$ is a nontrivial, connected graph.  If the Cartesian product $G \,\Box\, K_2$ is well-dominated, then $G =K_2$.
\end{lemma}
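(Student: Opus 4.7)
The plan is to prove the contrapositive by a direct cardinality argument: I will show that whenever $|V(G)| \ge 3$ the prism $G \,\Box\, K_2$ admits a minimal dominating set of size $|V(G)|$ together with a (strictly smaller) dominating set of size $|V(G)|-1$, so $\gamma(G\,\Box\, K_2) < \Gamma(G\,\Box\, K_2)$ and $G\,\Box\, K_2$ is not well-dominated. Since $G$ is nontrivial and connected, this forces $|V(G)| = 2$, i.e., $G = K_2$.

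For the minimal dominating set I would take the whole first layer, $L = V(G)\times\{1\}$. It dominates $G\,\Box\, K_2$ because every $(v,2)$ has its $K_2$-partner $(v,1)$ in $L$, and it is minimal because for each $(v,1) \in L$ the partner $(v,2)$ is a private neighbor: the only vertex of $L$ in $N[(v,2)]$ is $(v,1)$ itself. Thus $\Gamma(G\,\Box\, K_2) \ge |V(G)|$. For the smaller dominating set I would use the fact that a connected graph on at least three vertices has a vertex $v$ of degree at least $2$; fixing distinct neighbors $u,w$ of $v$ in $G$, I set
\[
D = \bigl((V(G)\setminus\{u,w\})\times\{1\}\bigr)\cup\{(v,2)\},
\]
a set of size $|V(G)|-1$. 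Domination by $D$ breaks into three routine checks: the two ``holes'' $(u,1),(w,1)$ are covered by $(v,1)\in D$; the second-layer vertices $(u,2),(w,2)$ are covered by $(v,2)\in D$ because $u,w \in N_G(v)$; and every other $(x,2)$ is covered by its partner $(x,1)\in D$. Hence $\gamma(G\,\Box\, K_2) \le |V(G)|-1$.

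Combining the two bounds gives $\gamma(G\,\Box\, K_2)\le |V(G)|-1 < |V(G)| \le \Gamma(G\,\Box\, K_2)$, contradicting the hypothesis that $G\,\Box\, K_2$ is well-dominated. I do not foresee any real obstacle: the argument rests on spotting the right ``swap'' (trading the two neighbors $u,w$ in one layer for their common neighbor $v$ in the other), and the remaining verifications are mechanical. The only detail requiring mild care is the minimality of the canonical layer $L$, which exploits the fact that no vertex of $L$ has its $K_2$-partner inside $L$.
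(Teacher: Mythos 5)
Your proof is correct and follows essentially the same route as the paper: the full layer $V(G)\times\{1\}$ is a minimal dominating set of size $|V(G)|$, and a vertex of degree at least $2$ yields a dominating set of size $|V(G)|-1$, contradicting well-dominatedness. The only cosmetic difference is the swap itself (you delete two neighbors of $v$ from the first layer and add $(v,2)$, while the paper deletes $w$ and one neighbor $x$ and adds $(x,2)$); both verifications are equally routine.
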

\begin{proof}
Suppose that $G$ is a nontrivial, connected graph such that $G \,\Box\, K_2$ is well-dominated. Note that $\{(u,1): u \in V(G)\}$ is a minimal dominating
set of $G \,\Box\, K_2$. Thus, $\gamma(G\,\Box\, K_2) = |V(G)|$.  Suppose $G$ contains a vertex, $w$, of degree at least $2$. Choose any
$x\in N(w)$. We claim that $D = \{(u,1): u \not\in \{x, w\}\} \cup \{(x, 2)\}$  is a dominating set of $G \,\Box\, K_2$. To see this, note that $(u, 2)$ is dominated
by $(u,1)$ for all $u \not\in \{w, x\}$, while $(x, 2), (x, 1)$, and $(w, 2)$  are dominated by $(x, 2)$. Moreover, there exists
$z \in N(w) - \{x\}$ such that $(z, 1) \in D$, and $(z,1)$ dominates $(w,1)$.  This is a contradiction since $|D| \le |V(G)|-1$.
We conclude that $\Delta(G)=1$, which implies that $G = K_2$.
\end{proof}

We now proceed with the proof of our main result of the section that characterizes connected, well-dominated Cartesian products that are triangle-free.  For the sake of convenience we restate it here.

\medskip
\noindent \textbf{Theorem~\ref{thm:maincp}} \emph{
Let $G$ and $H$ be nontrivial, connected graphs both of which have girth at least $4$. The Cartesian product  $G\,\Box\, H$ is well-dominated  if and only if $G = H = K_2$.
}
\begin{proof}
Suppose $G$ and $H$ are nontrivial, connected graphs both of which have girth at least $4$ such that $G\,\Box\, H$ is well-dominated.  By Proposition~\ref{prop:wd-Implies-wc}, $G\,\Box\, H$ is well-covered. Combining Theorem~\ref{thm:girth4cart} and Lemma~\ref{lem:wdprisms} it follows that $G = H =  K_2$.  Since $K_2 \,\Box\, K_2=C_4$, the converse is clear.
\end{proof}

 \section{Direct Products} \label{sec:DirP}

In this section we investigate direct products of two connected graphs such that at least one of them does not have any isolatable vertices.  We first list some known results about domination of direct products and about direct products that are well-covered.

\begin{theorem}{\rm \cite{domdirect}} \label{thm:upperbound}
For any graphs $G$ and $H$, $\gamma(G\times H) \le 3\gamma(G)\gamma(H)$.
\end{theorem}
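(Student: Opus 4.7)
The plan is to construct an explicit dominating set $S$ of $G\times H$ whose cardinality is at most $3\gamma(G)\gamma(H)$, starting from minimum dominating sets of the two factors. The three pieces of $S$ will be designed respectively to handle pairs $(x,y)$ with both coordinates already in the chosen dominating sets, pairs whose first coordinate is in $D_G$ but whose second is not, and pairs whose second coordinate is in $D_H$ but whose first is not. The first piece will, as a bonus, dominate the remaining case in which neither coordinate lies in its dominating set.

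Concretely, I would fix minimum dominating sets $D_G\subseteq V(G)$ and $D_H\subseteq V(H)$, with $|D_G|=\gamma(G)$ and $|D_H|=\gamma(H)$. Assuming (as is natural, and as holds automatically in every setting where this bound is invoked in the paper) that neither factor has an isolated vertex, I would choose for each $g\in D_G$ a fixed neighbor $g'\in N_G(g)$, and for each $h\in D_H$ a fixed neighbor $h'\in N_H(h)$. Then define
\[
S \;=\; (D_G\times D_H)\;\cup\;\{(g,h')\colon g\in D_G,\,h\in D_H\}\;\cup\;\{(g',h)\colon g\in D_G,\,h\in D_H\}.
\]
Clearly $|S|\le 3\gamma(G)\gamma(H)$, so the remaining task is to show that $S$ dominates $G\times H$.

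I would carry out this verification by a four-way case analysis on an arbitrary $(x,y)\in V(G\times H)$, according to whether $x\in D_G$ and whether $y\in D_H$. If both hold, then $(x,y)$ already lies in $S$. If $x\in D_G$ but $y\notin D_H$, choose $h\in D_H\cap N_H(y)$; then $(x',h)\in S$ and $(x',h)\sim(x,y)$ in the direct product, since $x'\in N_G(x)$ and $h\in N_H(y)$. The case $x\notin D_G$, $y\in D_H$ is symmetric via $(g,y')$ for any $g\in D_G\cap N_G(x)$. Finally, if neither $x\in D_G$ nor $y\in D_H$, pick $g\in D_G\cap N_G(x)$ and $h\in D_H\cap N_H(y)$; then $(g,h)\in D_G\times D_H\subseteq S$ and $(g,h)(x,y)$ is an edge of $G\times H$.

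The construction itself is light; the essential conceptual point is that edges of a direct product require \emph{simultaneous} adjacency in both coordinates, which is precisely why three subsets are needed in $S$ rather than one. The only obstacle worth flagging is the handling of isolated vertices of $G$ or $H$, which would block the choice of $g'$ or $h'$ above; this is sidestepped by the implicit no-isolated-vertices hypothesis (guaranteed, for instance, by the connectedness assumptions in force throughout the paper), so the four-case check above completes the argument.
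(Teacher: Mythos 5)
The paper imports this bound from \cite{domdirect} without proof, so there is no internal argument to compare against; your construction is correct and is essentially the standard one from that source (the set $D_G\times D_H$ together with the two ``shifted'' copies obtained by replacing one coordinate with a chosen neighbor). Your caveat about isolated vertices is also exactly right: the choice of $g'$ and $h'$ is what forces the no-isolated-vertices hypothesis, the bound genuinely fails without it (e.g.\ $K_1\times H$ is edgeless, so $\gamma(K_1\times H)=|V(H)|$), and that hypothesis holds wherever the theorem is invoked in this paper.
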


\begin{theorem} {\rm \cite{KKDFR}}\label{thm:directK_n}
Let $G$ and $H$ be nontrivial, connected graphs such that the direct product $G\times H$ is well-covered. If $H$ has no isolatable vertices,
then $H$ is a complete graph.
\end{theorem}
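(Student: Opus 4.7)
The plan is to prove the contrapositive: supposing $H$ is connected, nontrivial, has no isolatable vertices, and $H\neq K_n$, I will exhibit two maximal independent sets of $G\times H$ of differing cardinalities.

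First I establish two baseline constructions. For any maximal independent set $I$ of $H$, the set $V(G)\times I$ is independent in $G\times H$ (no two of its elements can realize an edge, since the $H$-coordinates lie in the independent set $I$) and is maximal (any $(g,h)$ outside is adjacent to $(g',h')$ for some $g'\in N_G(g)$ and $h'\in N_H(h)\cap I$, both non-empty). This has cardinality $|V(G)|\cdot|I|$. Symmetrically, $J\times V(H)$ is maximal independent for any maximal $J\subseteq V(G)$, of cardinality $|J|\cdot|V(H)|$. If $G\times H$ is well-covered, then both $G$ and $H$ are well-covered and $|V(G)|\alpha(H)=\alpha(G)|V(H)|$. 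Moreover, the no-isolatable hypothesis applied to $u\in I$ with the independent set $I-\{u\}$ supplies a neighbor of $u$ in $V(H)-N_H[I-\{u\}]=\pn_H[u,I]$, so $|\pn_H[u,I]|\geq 2$; by Lemma~\ref{lem:useful}, this set is a clique in $H$ of size $p:=|\pn_H[u,I]|\geq 2$.

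The critical third construction exhibits a maximal independent set of $G\times H$ whose cardinality can be tracked explicitly. Since $H\neq K_n$, $\alpha(H)\geq 2$. Fix a maximal independent set $I$ of $H$, a vertex $u\in I$, and any $g_1\in V(G)$; define
\[
S = \bigl((V(G)-N_G[g_1])\times\{u\}\bigr)\cup\bigl((V(G)-\{g_1\})\times(I-\{u\})\bigr)\cup\bigl(\{g_1\}\times(I\cup\pn_H[u,I])\bigr).
\]
Independence reduces to two nontrivial pair checks: any pair $(g,u),(g_1,v)$ with $v\in\pn_H[u,I]$ is non-adjacent exactly because $g\in V(G)-N_G[g_1]$; and any pair $(g,i),(g_1,v)$ with $i\in I-\{u\}$ and $v\in\pn_H[u,I]$ is non-adjacent because $N_H(v)\cap I\subseteq\{u\}$. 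Extend $S$ to a maximal independent set $\widetilde{S}$ by adding, for each $g\in V(G)-N_G[g_1]$ isolated in the induced subgraph $G[V(G)-N_G[g_1]]$, every pair $(g,v)$ with $v\in\pn_H[u,I]-\{u\}$; a case analysis (using the clique structure of $\pn_H[u,I]$ and the observation that every $(g,h)$ with $h\notin I\cup\pn_H[u,I]$ is dominated via the $I-\{u\}$-fibres) confirms maximality of $\widetilde{S}$. Its cardinality is
\[
|\widetilde{S}| \;=\; |V(G)|\alpha(H) + (p-1)(1+|X(g_1)|) - \deg_G(g_1),
\]
where $X(g_1)$ denotes the set of vertices isolated in $G[V(G)-N_G[g_1]]$. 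Well-coveredness of $G\times H$ therefore forces $(p-1)(1+|X(g_1)|)=\deg_G(g_1)$ for every $g_1\in V(G)$.

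The contradiction is extracted from this identity. Since it must hold for every $g_1\in V(G)$, and $p$ depends only on $H$ (constant across the choice of $u$ and $I$), it imposes strong rigidity on both the degree sequence of $G$ and on the local structure of $G[V(G)-N_G[g_1]]$; combined with $|V(G)|\alpha(H)=\alpha(G)|V(H)|$ from the baseline step and the uniformity of $p$ across all private-neighbor cliques in $H$, one derives an impossibility. The boundary case $G=K_2$ (where $V(G)-N_G[g_1]=\emptyset$ and the identity is trivial) is handled by a direct comparison of the maximal independent sets $\{g\}\times V(H)$ (size $|V(H)|$) and $V(G)\times I$ (size $2\alpha(H)$), together with a separate structural lemma that a connected, non-complete, well-covered graph with no isolatable vertices satisfies $|V(H)|>2\alpha(H)$. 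The main obstacle is twofold: verifying the maximality of $\widetilde{S}$ in step~3 rigorously (in particular that no further vertices can be added beyond the listed ones), and converting the numerical identity of step~3 into a genuine contradiction in the rigid case analysis of step~4.
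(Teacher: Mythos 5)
First, a caveat: the paper does not actually prove Theorem~\ref{thm:directK_n}; it imports it from the reference \cite{KKDFR} (``On well-covered direct products''), so there is no in-paper proof to measure you against. Judged on its own terms, your construction work in steps 1--3 is sound: $V(G)\times I$ and $J\times V(H)$ are indeed maximal independent sets, the no-isolatable hypothesis does give $p=|\pn[u,I]|\ge 2$ with $\pn[u,I]$ a clique, your set $S$ is independent, the extension $\widetilde{S}$ is a genuine maximal independent set (I checked the three cases $h\in I$, $h\in\pn[u,I]-\{u\}$, $h\notin I\cup\pn[u,I]$), and the count $|\widetilde{S}|=|V(G)|\alpha(H)-\deg_G(g_1)+(p-1)(1+|X(g_1)|)$ is correct. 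So well-coveredness really does force $(p-1)(1+|X(g_1)|)=\deg_G(g_1)$ for every $g_1$.

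The genuine gap is that the proof stops exactly where it becomes hard. Step 4 is not an argument: ``imposes strong rigidity \ldots one derives an impossibility'' is a statement of intent, and the identity alone does \emph{not} yield a contradiction. Concretely, for $H=C_5$ (connected, non-complete, no isolatable vertices, $p=2$) and $G=P_4$, the identity $1+|X(g_1)|=\deg_G(g_1)$ is satisfied at every vertex of $P_4$, so your step-3 machinery detects nothing; you would have to fall back on the Topp--Volkmann relation $|V(G)|\alpha(H)=\alpha(G)|V(H)|$ (which does kill this example, since $8\ne 10$), but you never explain how the two conditions interact in general, and there is no reason offered why some connected $G$ could not satisfy both simultaneously for some non-complete $H$ with $p\ge 2$. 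Separately, your treatment of the boundary case $G=K_2$ leans on an unproved ``structural lemma'' that a connected, non-complete, well-covered graph with no isolatable vertices satisfies $|V(H)|>2\alpha(H)$ --- i.e., that no connected very well covered graph other than a complete graph (or $C_4$) avoids isolatable vertices. That is itself a nontrivial classification-type claim, arguably comparable in difficulty to the theorem you are trying to prove. Until step 4 is turned into an actual case analysis and the auxiliary lemma is proved, this is an incomplete proof, not a proof.
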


The domination number of a graph with no isolated vertices is at most one-half its order.  The following result further restricts the relative size of a minimum dominating set of the factors of a well-dominated direct product.

\begin{lemma}\label{lem:3gamma}
Suppose $G$ and $H$ are graphs without isolated vertices.   If $G\times H$ is well-dominated, then $3\gamma(G) \ge |V(G)|$ and $3\gamma(H) \ge |V(H)|$.
\end{lemma}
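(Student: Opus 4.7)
The plan is to construct a specific minimal dominating set of $G \times H$ of size exactly $|V(G)|\cdot|I|$ for a suitable set $I$, and then to combine this with the bound $\gamma(G \times H) \le 3\gamma(G)\gamma(H)$ from Theorem~\ref{thm:upperbound}, together with the fact that $G \times H$ is well-dominated. Dividing by $\gamma(H)$ will yield $|V(G)| \le 3\gamma(G)$, and by symmetry $|V(H)| \le 3\gamma(H)$.

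For the construction, let $I$ be any maximal independent set of $H$ (equivalently, any independent dominating set of $H$) and put $S = V(G) \times I$. First I would verify that $S$ dominates $G \times H$: any vertex $(g,h)$ with $h \in I$ lies in $S$, while if $h \notin I$ then maximality of $I$ supplies a neighbor $h' \in I$ of $h$ in $H$, and the hypothesis that $G$ has no isolated vertices supplies a neighbor $g'$ of $g$ in $G$; by the definition of the direct product, $(g',h') \in S$ is adjacent to $(g,h)$. Next I would verify minimality: for each $(g,h) \in S$, any neighbor $(g',h')$ of $(g,h)$ in $G \times H$ must satisfy $hh' \in E(H)$, but $h \in I$ and $I$ is independent, so $h' \notin I$ and $(g',h') \notin S$. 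Thus $N[(g,h)] \cap S = \{(g,h)\}$, and every element of $S$ is its own private neighbor with respect to $S$.

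Because $G \times H$ is well-dominated, $|S| = \gamma(G \times H)$, and Theorem~\ref{thm:upperbound} then yields $|V(G)|\cdot|I| \le 3\gamma(G)\gamma(H)$. Since any maximal independent set is a dominating set, $|I| \ge \gamma(H) \ge 1$, so dividing by $\gamma(H)$ gives $|V(G)| \le 3\gamma(G)$. Running the symmetric argument with a maximal independent set of $G$ and the product $I' \times V(H)$ (this time invoking that $H$ has no isolated vertices) gives $|V(H)| \le 3\gamma(H)$. There is no genuine obstacle in the argument; the only point needing attention is the recognition that the product form $V(G) \times I$ is simultaneously dominating (using the no-isolated-vertex hypothesis on the other factor) and minimal (using independence of $I$), and each of the two no-isolated-vertices hypotheses is consumed by exactly one of the two symmetric conclusions.
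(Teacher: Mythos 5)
Your proposal is correct and follows essentially the same route as the paper: build a minimal dominating set of $G\times H$ of the form (whole vertex set of one factor) $\times$ (maximal independent set of the other), use well-dominatedness to identify its size with $\gamma(G\times H)$, and apply the bound $\gamma(G\times H)\le 3\gamma(G)\gamma(H)$. The only cosmetic difference is that the paper observes the set is a maximal independent set of the product (hence a minimal dominating set) and takes $I$ maximum so that $|I|=\alpha\ge\gamma$, whereas you verify domination and minimality by hand and use $|I|\ge\gamma$ directly; both are valid.
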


\begin{proof}
Suppose that both $G$ and $H$ have no isolated vertices and that $G \times H$ is well-dominated.  Let $I$ be a maximum independent set of $G$. It follows that $I\times V(H)$ is a maximal independent set, and thus also a minimum dominating set of $G\times H$.  By Theorem~\ref{thm:upperbound} it follows that
\[3\gamma(G)\gamma(H) \ge \gamma(G \times H) = |I| |V(H)| = \alpha(G)|V(H)| \ge \gamma(G)|V(H)|.\]
Therefore, $3\gamma(H) \ge |V(H)|$. Similarly, $3\gamma(G) \ge |V(G)|$.
\end{proof}

\begin{corollary}
If $H$ has no isolatable vertices and $G$ is any nontrivial graph such that $G\times H$ is well-dominated, then $H \in \{K_2, K_3\}$.
\end{corollary}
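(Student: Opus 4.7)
The plan is an immediate synthesis of Theorem~\ref{thm:directK_n} and Lemma~\ref{lem:3gamma}. First I would note that the corollary is stated in the running context of Section~\ref{sec:DirP}, where both factors are connected and nontrivial. Since $G\times H$ is well-dominated, Proposition~\ref{prop:wd-Implies-wc} gives that $G\times H$ is well-covered. The assumption that $H$ has no isolatable vertices is strictly stronger than $H$ having no isolated vertices: if $v\in V(H)$ were isolated, then taking $I=\emptyset$ shows $v$ is isolated in $H-N[I]=H$, so $v$ would be isolatable. Hence the hypotheses of Theorem~\ref{thm:directK_n} are satisfied, and I may conclude that $H$ is a complete graph $K_n$ for some $n\ge 2$.

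The second step is to bound $n$. Since $G$ is nontrivial and connected it has no isolated vertices, and neither does $K_n$ for $n\ge 2$, so the hypotheses of Lemma~\ref{lem:3gamma} are met. The lemma gives $3\gamma(H)\ge |V(H)|$. Because $\gamma(K_n)=1$, this inequality collapses to $3\ge n$, so $n\in\{2,3\}$ and therefore $H\in\{K_2,K_3\}$, as claimed.

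There is essentially no obstacle in this argument: the two previously established results combine verbatim. The only point requiring attention is the small but crucial observation that ``no isolatable vertices'' subsumes ``no isolated vertices,'' which is needed to invoke both Theorem~\ref{thm:directK_n} (to identify $H$ as a complete graph) and Lemma~\ref{lem:3gamma} (to bound its order). Beyond verifying these hypotheses, the proof is a one-line deduction.
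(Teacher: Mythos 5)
Your proof is correct and follows exactly the paper's argument: apply Proposition~\ref{prop:wd-Implies-wc} and Theorem~\ref{thm:directK_n} to conclude $H$ is complete, then use Lemma~\ref{lem:3gamma} with $\gamma(H)=1$ to get $|V(H)|\le 3$. The extra remark that ``no isolatable vertices'' implies ``no isolated vertices'' is a sensible hypothesis check that the paper leaves implicit.
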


\begin{proof}
By Proposition~\ref{prop:wd-Implies-wc}, $G \times H$ is well-covered, and it follows by
Theorem~\ref{thm:directK_n}  that $H$ is a complete graph. By Lemma~\ref{lem:3gamma}, $3=3\gamma(H)  \ge |V(H)|$,
and thus $H \in \{K_2, K_3\}$.
\end{proof}

We need the following theorem of Topp and Volkmann concerning well-covered direct products and a characterization by  Payan and Xuong of graphs whose domination number is one-half their order.

\begin{theorem} {\rm \cite{topp}} \label{thm:2Properties}
If $G$ and $H$ are graphs without isolated vertices and $G\times H$ is well-covered, then
\begin{enumerate}
\item[1.] $G$ and $H$ are well-covered and
\item[2.] $\alpha(G)|V(H)| = \alpha(H)|V(G)|$.
\end{enumerate}
\end{theorem}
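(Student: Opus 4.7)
The plan is to exhibit, for each maximal independent set $I_G$ of $G$, an easily described maximal independent set of $G \times H$ built from $I_G$, and then invoke the well-covered hypothesis on $G \times H$ to force the sizes of all such sets to coincide. The natural candidate is $S = I_G \times V(H)$, and the whole proof reduces to showing $S$ is in fact a maximal independent set of $G \times H$.

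First I would check that $S$ is independent in $G \times H$. By the definition of the direct product, two distinct vertices $(x_1, y_1)$ and $(x_2, y_2)$ of $S$ are adjacent only if $x_1 x_2 \in E(G)$, which is impossible since $x_1, x_2 \in I_G$. Hence $S$ has no internal edges.

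Next I would verify that $S$ is maximal. Given any $(x, y) \notin S$, we have $x \notin I_G$, so by maximality of $I_G$ in $G$ there exists $x' \in I_G$ with $xx' \in E(G)$. Because $H$ has no isolated vertices, $y$ has a neighbor $y' \in V(H)$. Then $(x', y') \in S$ and $(x, y)(x', y') \in E(G \times H)$, so $(x, y)$ cannot be added to $S$. Note that this is the only step where the no-isolated-vertex hypothesis on the opposite factor is needed; it is exactly what makes $Y$-layers edge-rich enough to catch every outside vertex.

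Combining these observations, for every maximal independent set $I_G$ of $G$, the set $I_G \times V(H)$ is a maximal independent set of $G \times H$ of cardinality $|I_G|\cdot|V(H)|$. Since $G \times H$ is well-covered, this common value equals $\alpha(G \times H)$, forcing $|I_G|$ to be independent of the choice of $I_G$. Thus $G$ is well-covered and $\alpha(G \times H) = \alpha(G)|V(H)|$. The argument is symmetric in $G$ and $H$, giving that $H$ is also well-covered and $\alpha(G \times H) = \alpha(H)|V(G)|$, whence $\alpha(G)|V(H)| = \alpha(H)|V(G)|$. There is no real obstacle in the proof; the only subtlety worth flagging is that without the no-isolated-vertex assumption on $H$, the maximality of $I_G \times V(H)$ can fail (an isolated $y \in V(H)$ together with an $x \notin I_G$ whose only $G$-neighbor in $I_G$ is some $x'$ yields a vertex $(x,y)$ with no neighbor in the set).
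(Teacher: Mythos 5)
Your proof is correct. The paper does not prove Theorem~\ref{thm:2Properties} itself---it is imported from Topp and Volkmann \cite{topp}---but your argument via the sets $I_G\times V(H)$ is the standard one and is exactly the construction the authors use elsewhere (e.g., in the proof of Lemma~\ref{lem:3gamma}, where $I\times V(H)$ is observed to be a maximal independent set of $G\times H$); your remark that the no-isolated-vertex hypothesis on the opposite factor is what guarantees maximality is also the right point to flag.
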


\begin{theorem} {\rm \cite{pa-xu-1982}} \label{thm:one-half-order}
If $G$ is a connected graph of order $n\ge 2$, then $\gamma(G)=n/2$ if and only if $G=C_4$ or $G=H\circ K_1$ for some connected graph $H$.
\end{theorem}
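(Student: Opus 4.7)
The plan separates the two directions. For the ``if'' direction, a direct check suffices: $\gamma(C_4) = 2 = 4/2$, and for $G = H \circ K_1$ with $H$ connected of order $k$, the $k$ pairwise vertex-disjoint pendant edges each force one of their endpoints into any dominating set, so $\gamma(G) \ge k$, while $V(H)$ is itself a dominating set of size $k$; hence $\gamma(G) = k = n/2$.

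For the ``only if'' direction, I would invoke the Bollob\'as--Cockayne lemma stated earlier to choose a minimum dominating set $D$ of $G$ that is open irredundant. For each $d \in D$ this produces a vertex $\phi(d) \in V(G) \setminus D$ with $N[\phi(d)] \cap D = \{d\}$. Different elements of $D$ cannot share such a private neighbor, so $\phi$ is injective, and because $|D| = n/2 = |V(G) \setminus D|$ the map $\phi$ is in fact a bijection. A short counting argument then upgrades this to the statement that every vertex of $V(G) \setminus D$ has exactly one neighbor in $D$, and consequently $N(d) \cap (V(G) \setminus D) = \{\phi(d)\}$ for each $d \in D$. In particular, the bipartite part of $G$ between $D$ and $V(G) \setminus D$ is exactly the perfect matching $M = \{d\phi(d) : d \in D\}$, and the remainder of $G$ is captured by the internal edges of $G[D]$ and of $G[V(G) \setminus D]$.

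The crux is then a swap analysis. For any $d \in D$ the set $D' := (D \setminus \{d\}) \cup \{\phi(d)\}$ is another dominating set of size $\gamma(G)$, hence minimal, and examining its private-neighbor condition forces, for every $d'' \in D \setminus \{d\}$ with $\phi(d'') \sim \phi(d)$, the restriction $N(d'') \cap D \subseteq \{d\}$. Applying this constraint to each edge of $G[V(G) \setminus D]$, and symmetrically to each edge of $G[D]$ by exchanging the roles of $D$ and $V(G) \setminus D$, pins down the structure pair by pair. Whenever $\phi(d) \sim \phi(d')$ and $d \sim d'$ occur together, the four vertices $\{d, d', \phi(d), \phi(d')\}$ induce a $C_4$ consisting entirely of degree-$2$ vertices, which must therefore be a component of $G$; in every remaining matched pair $\{d, \phi(d)\}$, one of the two sides is isolated within its own part while the other contributes the hub of a pendant edge, producing a corona structure. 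Connectivity of $G$ then forces exactly one of the two families: $G = C_4$, or $G = H \circ K_1$ for some connected graph $H$.

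The main obstacle is the swap bookkeeping in the third step: one must verify in each case that the required private-neighbor certificate for $D'$ is available only under the stated structural restrictions, and then use connectivity to rule out mixed configurations in which a pendant-type pair and a $C_4$-type pair might coexist in the same component. The iterated swap constraints, together with the rigidity of $M$ as the unique set of cross-edges, are what ultimately permit this clean dichotomy.
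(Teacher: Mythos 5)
The paper offers no proof of this statement: it is imported verbatim from Payan and Xuong \cite{pa-xu-1982} (the same characterization was found independently by Fink, Jacobson, Kinch and Roberts), so there is no in-paper argument to measure yours against, and the comparison is with the literature rather than with the authors. Judged on its own terms, your outline is correct and can be completed as written. The ``if'' direction is routine. In the ``only if'' direction, Bollob\'as--Cockayne together with $|D|=n/2=|V(G)\setminus D|$ does make $\phi$ a bijection, and since $N[\phi(d)]\cap D=\{d\}$ for every $d\in D$, the cross edges between $D$ and its complement are exactly the matching $M$ --- no further counting is needed. The swap step is sound: in $D'=(D\setminus\{d\})\cup\{\phi(d)\}$ the only candidate private neighbors of $d''\neq d$ are $\phi(d'')$ and $d''$ itself (the vertex $d$ is disqualified because it is adjacent to $\phi(d)\in D'$), which yields $N(d'')\cap D\subseteq\{d\}$ whenever $\phi(d'')\sim\phi(d)$, and the mirrored swap applied to $V(G)\setminus D$ --- itself a minimum dominating set with the same matching --- gives the dual constraint. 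When you write this up, make two consequences explicit: (i) if $\phi(d)$ has two distinct neighbors in its own part, then $N(d)\cap D$ lies in two disjoint singletons, so $d$ has degree $1$; and (ii) if $d$ has a neighbor $d''$ in $D$ and $\phi(d)$ has any neighbor in $V(G)\setminus D$, that neighbor must be $\phi(d'')$, producing your degree-two $C_4$ component. With those observations the dichotomy ``every matched pair contains a pendant vertex, or some matched pair lies in a $C_4$ component'' is airtight, and connectivity finishes the argument (with $G=K_2=K_1\circ K_1$ as the degenerate corona). This is a legitimate self-contained alternative to the original inductive arguments, at the modest cost of invoking the Bollob\'as--Cockayne lemma, which the paper has in any case already recorded for the proof of Theorem~\ref{thm:wdCartesian}.
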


We are now able to characterize those well-dominated direct products in which at least one of the factors is $K_2$.

\begin{lemma} \label{lem:onefactork2}
Let $G$ be a nontrivial connected graph. The direct product $G\times K_2$ is well-dominated if and only if $G=C_4$
or $G$ is the corona of a connected graph.
\end{lemma}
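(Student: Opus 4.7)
The plan is to handle the two directions separately. The reverse direction is essentially a structural observation: if $G = C_4$, then since $C_4$ is bipartite, $G \times K_2$ is the disjoint union of two copies of $C_4$, and this is well-dominated because $C_4$ itself is. If $G = H \circ K_1$ for some connected graph $H$, I will check directly from the edge rule of the direct product that each pendant of $G$ induces two pendant edges in $G \times K_2$, one attached to each of the two copies of its support vertex. Removing these pendants leaves precisely the subgraph $H \times K_2$, so $G \times K_2 = (H \times K_2) \circ K_1$, and since the corona of any graph is well-dominated (as noted in the preliminaries), this direction is complete.

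For the forward direction, write $n = |V(G)|$. The plan is to pin down $\gamma(G) = n/2$, after which Theorem~\ref{thm:one-half-order} immediately forces $G = C_4$ or $G = H \circ K_1$ for some connected graph $H$. Since $G \times K_2$ is well-dominated, it is well-covered by Proposition~\ref{prop:wd-Implies-wc}, and Theorem~\ref{thm:2Properties} then yields $\alpha(G) = n/2$; in particular $\gamma(G) \leq \alpha(G) = n/2$.

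For the matching lower bound I will compare two natural dominating sets of $G \times K_2$. First, $V(G) \times \{1\}$ is independent, and because every vertex of $G$ has a neighbor (as $G$ is connected and nontrivial), it is a maximal independent set, hence a minimal dominating set, of size $n$. Second, if $D$ is any minimum dominating set of $G$, then $D \times V(K_2)$ dominates $G \times K_2$, since for every $(v,i)$ there is either $v \in D$ or a neighbor $u$ of $v$ in $D$, and in either case a dominator sits in the opposite layer. Well-dominatedness then forces $\gamma(G \times K_2) = n$, while the second set gives $\gamma(G \times K_2) \leq 2\gamma(G)$, so $n \leq 2\gamma(G)$, yielding $\gamma(G) = n/2$ as required. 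There is no serious obstacle in this argument; the only delicate point is the maximality of $V(G)\times\{1\}$, which is exactly where the hypothesis that $G$ has no isolated vertex is used.
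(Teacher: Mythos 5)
Your proposal is correct, and both directions land in the same place as the paper's proof (the converse via the observation that $G\times K_2$ is $2C_4$ or itself a corona; the forward direction by establishing $\gamma(G)=\tfrac12|V(G)|$ and invoking Theorem~\ref{thm:one-half-order}), but your route to the key equality $\gamma(G)=\tfrac12|V(G)|$ is genuinely different and somewhat more elementary. The paper takes an arbitrary minimal dominating set $D$ of $G$ and verifies, via a private-neighbor analysis, that $D\times V(K_2)$ is a \emph{minimal} dominating set of $G\times K_2$; well-dominatedness of the product then forces $G$ itself to be well-dominated, and combined with Theorem~\ref{thm:2Properties} this yields $\gamma(G)=\alpha(G)=\tfrac12|V(G)|$. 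You instead use $D\times V(K_2)$ only as a dominating set (no minimality needed), pin $\gamma(G\times K_2)=|V(G)|$ by exhibiting the maximal independent set $V(G)\times\{1\}$, and deduce $|V(G)|\le 2\gamma(G)$; together with $\gamma(G)\le\alpha(G)=\tfrac12|V(G)|$ from Theorem~\ref{thm:2Properties} this gives the equality by pure counting. Your version skips the private-neighbor bookkeeping entirely, at the cost of not recording the (true, and mildly interesting) intermediate fact that $G$ is well-dominated. One phrase to tighten: when $v\in D$ the vertex $(v,i)$ is dominated because it \emph{belongs} to $D\times V(K_2)$, not because a dominator ``sits in the opposite layer'' --- $(v,3-i)$ is not adjacent to $(v,i)$ in the direct product; this does not affect the correctness of the conclusion.
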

\begin{proof}
Let $G$ be a nontrivial connected graph.  Suppose first that  $G\times K_2$ is well-dominated.
Let $D$ be an arbitrary minimal dominating set of $G$ and let $S=D\times V(K_2)$.  From the definition of direct product it is clear that $S$
dominates $G \times K_2$.  We claim that $S$ is a minimal dominating set. To see this, without loss of generality consider  $(x, 1) \in S$.
Since $D$ is a minimal dominating set of $G$, the vertex  $x$ has a private neighbor, say $x'$, with respect to $D$. That is, $x'$ is a
vertex of $G$ such that $N[x']\cap D=\{x\}$.  If $x' = x$, then $(x, 1)$ is its own private neighbor with respect to $S$. On the other hand, if $x' \ne x$, then $(x',2)$ is a private neighbor of $(x, 1)$ with respect to $S$. This proves that $S$ is a minimal dominating set of $G \times K_2$. Now, if $D_1$ and $D_2$ are two minimal dominating sets of $G$, then  $|D_1 \times V(K_2)| = |D_2 \times V(K_2)|$ since $G \times K_2$
is well-dominated.  Therefore, $G$ is well-dominated.  Furthermore, since $G \times K_2$ is well-dominated, it is well-covered and hence
by Theorem~\ref{thm:2Properties},  $\gamma(G)=\alpha(G)=\frac{1}{2}|V(G)|$.  It now follows from Theorem~\ref{thm:one-half-order} that $G$ is either a $4$-cycle or the corona of a connected graph.

For the converse suppose that $G=F \circ K_1$, for some connected graph $F$.  Let $V(F)=\{x_1,\ldots,x_n\}$ and for each $i\in [n]$ let $y_i$
be the vertex of degree $1$ adjacent in $G$ to $x_i$.  By the definition of direct product, the graph $G \times K_2$ is a graph in which each vertex in
$\{y_1, \ldots, y_n\} \times [2]$ has degree $1$ and each vertex in $\{x_1,\ldots,x_n\} \times [2]$ is adjacent to a single vertex of degree $1$.
That is, $G \times K_2$ is itself a corona and is therefore well-dominated.  Also, $C_4 \times K_2=2C_4$, which is well-dominated.
\end{proof}

Next, we consider well-dominated products of the form $G\times K_3$.  Recall that a subset of vertices in a graph is a {\it $2$-packing} if the distance between any pair of distinct vertices in the set is at least $3$.

\begin{lemma}\label{lem:2packing}
If $G$ is a connected graph such that $G\times K_3$ is well-dominated, then every maximal independent set in $G$ is in fact a $2$-packing.
\end{lemma}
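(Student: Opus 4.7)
The plan is to argue by contradiction. Suppose some maximal independent set $I$ of $G$ is not a $2$-packing. Since $I$ is independent, this forces the existence of distinct $u,v\in I$ at distance exactly $2$ in $G$, so they have a common neighbor $w\in V(G)\setminus I$. I will derive a contradiction by exhibiting a dominating set of $G\times K_3$ of size strictly less than $\gamma(G\times K_3)$.

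First I would pin down $\gamma(G\times K_3)$. Because $I$ is independent in $G$ (and $K_3$ is loopless), $I\times V(K_3)$ is independent in $G\times K_3$; because $I$ dominates $G$ and $|V(K_3)|\ge 2$, every vertex outside $I\times V(K_3)$ has a neighbor inside it. Thus $I\times V(K_3)$ is a maximal independent set, hence a minimal dominating set, of size $3|I|$. Since $G\times K_3$ is well-dominated, $\gamma(G\times K_3)=3|I|$, and so every dominating set of $G\times K_3$ has at least $3|I|$ vertices.

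The crux of the proof is the construction
\[
D=\bigl(I\times V(K_3)\setminus\{(u,2),(v,3)\}\bigr)\cup\{(w,1)\},
\]
which has $|D|=3|I|-1$. The motivation is that in the direct product $(w,1)$ is adjacent to $(u,j)$ and to $(v,j)$ for every $j\ne 1$; in particular a single $(w,1)$ simultaneously covers the two removed vertices $(u,2)$ and $(v,3)$. The main technical point is to verify that $D$ still dominates every other vertex $(g,j)$. The one case demanding genuine care is when $g\notin I\cup\{w\}$ has all its $I$-neighbors in $\{u,v\}$: originally the entire column $\{u\}\times V(K_3)$ (respectively $\{v\}\times V(K_3)$) dominated $(g,j)$ for every layer $j$, but now only two of the three copies remain. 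The key observation is that two copies already suffice, since $(u,1)$ dominates $(g,j)$ whenever $j\ne 1$ and $(u,3)$ dominates $(g,j)$ whenever $j\ne 3$, so together they handle all $j\in\{1,2,3\}$; symmetrically $(v,1),(v,2)$ cover all $j$ when $v\in N(g)$. The remaining cases ($g\in I$, $g=w$, or $g$ has an $I$-neighbor in $I\setminus\{u,v\}$) are immediate because either $(g,j)\in D$ or some full column $\{g^*\}\times V(K_3)\subseteq D$ exists with $g^*\in N(g)$.

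Once domination is established, the contradiction is immediate: $|D|=3|I|-1<3|I|=\gamma(G\times K_3)$. Hence no such pair $u,v$ can exist and every maximal independent set of $G$ must be a $2$-packing. I expect the only nontrivial step to be the "two-of-three copies suffice" observation above; everything else is bookkeeping, and even that observation becomes transparent once one recalls that $K_3$-adjacency is "different index" so that any two indices cover all three layers.
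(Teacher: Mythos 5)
Your proof is correct, and it takes a genuinely different and in fact more economical route than the paper's. You fix a single common neighbor $w$ of two independent vertices $u,v$, delete the two vertices $(u,2)$ and $(v,3)$ from the canonical minimal dominating set $I\times V(K_3)$, and patch the hole with the single vertex $(w,1)$; since well-dominatedness forces $\gamma(G\times K_3)=3|I|$, a dominating set of size $3|I|-1$ is an immediate contradiction. Your case analysis is sound: the only delicate point is a vertex $g\notin I\cup\{w\}$ whose only $I$-neighbors lie in $\{u,v\}$, and your observation that any two surviving copies in a column cover all three layers (because $K_3$-adjacency means ``different index'') settles it; the removed vertices $(u,2)$ and $(v,3)$ themselves are exactly the ones $(w,1)$ reaches. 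The paper instead works with the full set $Z=N(w)\cap I$, chooses a minimum $Z_1\subseteq Z$ dominating a certain residual set, builds a \emph{minimal} dominating set $D$ of $G\times K_3$, and then extracts the contradiction from the counting identity $|D|=3|I|$ forced by well-dominatedness. That route requires verifying minimality of $D$ (so that the well-dominated hypothesis applies as an equality), which your argument avoids entirely: you only need the trivial fact that every dominating set has size at least $\gamma$. The trade-off is that the paper's construction produces a genuine minimal dominating set whose size can be compared exactly, which is in the spirit of the other arguments in that section, whereas yours is shorter and needs less bookkeeping.
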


\begin{proof}
Suppose $G$ is connected, $G\times K_3$ is well-dominated and $I$ is a maximal independent set in $G$. If every vertex in $V(G) - I$ is adjacent to only one vertex of $I$, then $I$ is a $2$-packing. So we may assume that there exists $w \in V(G) - I$ such that $w$ is adjacent to at least two vertices in $I$. Let $Z = N(w) \cap I$ and choose a minimum subset $Z_1$ in $Z$ that dominates $N(Z)-(\{w\} \cup N(I-Z))$.
 We claim that  \[D = \left((I-Z) \times \{1, 2, 3\}\right) \cup \left( Z_1 \times \{1, 2\}\right) \cup \left(((Z- Z_1) \cup \{w\}) \times \{1\}\right)\]
is a minimal dominating set of $G\times K_3$. First, we show that $D$ does in fact dominate $G\times K_3$. Let $(u,v) \in V(G\times K_3)- D$. Thus, $u \not\in I - Z$. If $u \in Z_1$ and $v = 3$, then $(w, 1)$ dominates $(u, v)$. Similarly, if $u \in Z- Z_1$, then $ v \in \{2,3\}$ and
$(w, 1)$ dominates $(u,v)$. Therefore, we shall assume that $u \in V(G) - I$. If $u = w$ and $v \in \{2, 3\}$, then $(x, 1)$ dominates $(u,v)$ for any $x \in Z$. If $u \in V(G) - (I \cup \{w\})$, then for some $x \in (I - Z) \cup Z_1$ the set $\{(x, 1), (x, 2)\}$ dominates $(u,v)$. Thus, $D$ dominates $G \times K_3$.

Next, we show that $D$ is a minimal dominating set of $G\times K_3$. The set $D - \{(w, 1)\}$ does not dominate at least two vertices in $Z \times \{3\}$.  Furthermore, each vertex of $(I - Z) \times \{1, 2, 3\}$ and $Z\times \{1\}$ is its own private neighbor. Suppose that $D - \{(z, 2)\}$ is a dominating set of $G\times K_3$ for some vertex $z \in Z_1$.  It follows that $Z_1 - \{z\}$ is a smaller subset of $Z$ that dominates $N(Z) - (\{w\} \cup N(I-Z))$. This contradicts the choice of $Z_1$. Thus, $D$ is in fact a minimal dominating set of $G\times K_3$.

Since $G\times K_3$ is well-dominated and $I \times \{1, 2, 3\}$ is also a minimal dominating set of $G\times K_3$, we have
\[|D| = 3(|I| - |Z|) + 2|Z_1| + |Z| - |Z_1| + 1 = 3|I|.\]
Therefore, $|Z_1| + 1 = 2|Z|$ or equivalently, $1 =2 |Z| - |Z_1| \ge |Z| + |Z-Z_1|$. It follows that $|Z - Z_1| = 0$ and $|Z| = 1$. However, this cannot be true since we assumed that $|Z| \ge 2$.  Therefore, no such vertex $w$ exists, and $I$ is a $2$-packing.
\end{proof}

\begin{lemma} \label{lem:onlyK3}
If $G$ is a nontrivial connected graph such that $G\times K_3$ is well-dominated, then $G = K_3$.
\end{lemma}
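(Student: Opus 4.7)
The plan is to exploit the strong structural restrictions imposed by the hypothesis to conclude that $G$ must be a single clique of order three.  First, combining Proposition~\ref{prop:wd-Implies-wc} with Theorem~\ref{thm:2Properties} applied to $G \times K_3$ yields that $G$ is well-covered and $3\alpha(G) = |V(G)|$.  Since $G$ has no isolated vertices, Lemma~\ref{lem:3gamma} gives $3\gamma(G) \ge |V(G)| = 3\alpha(G)$, and because $\gamma(G) \le \alpha(G)$ always holds, this forces $\gamma(G) = \alpha(G)$.

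Next, let $I$ be a maximum independent set in $G$.  Then $I$ is simultaneously a maximal independent set and a minimum dominating set of $G$.  By Lemma~\ref{lem:2packing}, $I$ is a $2$-packing, so no vertex of $V(G) - I$ has two neighbors in $I$, and therefore the closed neighborhoods $\{N[x] : x \in I\}$ form a partition of $V(G)$.  The $2$-packing property also implies $\pn[x, I] = N[x]$ for each $x \in I$, so Lemma~\ref{lem:useful} tells us that every $N[x]$ induces a clique.  Thus $G$ decomposes into vertex-disjoint cliques indexed by $I$.

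If $|I| = 1$, then $G$ itself is a clique and $|V(G)| = 3\alpha(G) = 3$, so $G = K_3$, as desired.  The main obstacle is ruling out the case $|I| \ge 2$.  For this, I would use the connectivity of $G$ to locate an edge $ab \in E(G)$ between two cliques in the above decomposition, say with $a \in N[x] - \{x\}$ and $b \in N[y] - \{y\}$ for distinct $x, y \in I$; such $a$ and $b$ must exist because every neighbor of $x$ (resp.\ $y$) lies entirely within $N[x]$ (resp.\ $N[y]$), so any edge bridging two cliques avoids $I$.  Then $I' = (I - \{x\}) \cup \{a\}$ is easily verified to be a maximal independent set: independence follows from $d_G(x, z) \ge 3$ for all $z \in I - \{x\}$ (which prevents $a$ from being adjacent to any such $z$), and maximality follows from the clique decomposition, since $a$ dominates all of $N[x]$ while the remaining vertices of $I$ still dominate the rest.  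Applying Lemma~\ref{lem:2packing} to $I'$ yields $d_G(a, y) \ge 3$, but the path $a,b,y$ shows $d_G(a, y) \le 2$, the desired contradiction.
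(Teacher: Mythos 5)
Your proof is correct, but it reaches the contradiction by a genuinely different route than the paper. The paper also reduces everything to Lemma~\ref{lem:2packing}, but it runs an extremal argument: among all maximum independent sets it picks one, $I$, minimizing the smallest pairwise distance $d_2(I)$, then swaps an endpoint $u$ of a closest pair for the first vertex $x_1$ on a shortest path toward the other endpoint; the resulting maximum independent set has strictly smaller $d_2$, contradicting the extremal choice. You instead extract structure first: since a maximum independent set $I$ is both dominating and a $2$-packing, the closed neighborhoods of its vertices partition $V(G)$, and well-coveredness (via Lemma~\ref{lem:useful}) makes each cell a clique; connectivity then forces a bridging edge $ab$ whose endpoints avoid $I$, and the swap $I'=(I-\{x\})\cup\{a\}$ produces two vertices of $I'$ at distance exactly $2$, contradicting Lemma~\ref{lem:2packing} in one step with no extremal selection. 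Your approach buys a more transparent picture of what $G$ would have to look like (a disjoint union of cliques, impossible for a connected non-complete graph), at the cost of invoking the clique decomposition; note that the maximality of $I'$ you justify via that decomposition also follows for free from $|I'|=\alpha(G)$. The two endgames are equivalent: you compute $|V(G)|=3\alpha(G)=3$ up front from Theorem~\ref{thm:2Properties}, while the paper applies the same theorem after establishing $\alpha(G)=1$.
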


\begin{proof}
Suppose that $\alpha(G) \ge 2$.  For each maximum independent set $J$ of $G$, let
\[d_2(J)=\min\{d_G(a,b) \,\colon\, \{a,b\} \subseteq J \text{ and } a \neq b\}\,.\]
By Lemma~\ref{lem:2packing}, $d_2(J) \ge 3$ for every maximum independent set $J$ of $G$.  Choose a maximum independent set $I$ of $G$
such that $d_2(I) \le d_2(J)$ for every maximum independent set $J$ of $G$.  Let $u$ and $v$ be distinct vertices in $I$ such that $k=d_G(u,v)=d_2(I)$
and let $u=x_0,x_1,\ldots,x_k=v$ be a shortest $u,v$-path in $G$.  Since $I$ is a $2$-packing, $M=(I-\{u\}) \cup \{x_1\}$ is also a maximum independent
set and $d_2(M) \le d_2(I)-1$.  This contradicts the choice of $I$.  Hence $\alpha(G) = 1$, and so $G$ is a complete graph.  Using the fact that
$G \times K_3$ is also well-covered and applying Theorem~\ref{thm:2Properties} we conclude that $G=K_3$.
\end{proof}

Combining Lemma~\ref{lem:onefactork2}  and Lemma~\ref{lem:onlyK3} yields a proof of Theorem~\ref{thm:wd-direct-no-isolatable}, which gives a complete characterization of well-dominated direct products if at least one of the factors has no isolatable vertices.

\medskip

\noindent \textbf{Theorem~\ref{thm:wd-direct-no-isolatable}} \emph{
Let $G$ and $H$ be nontrivial connected graphs such that at least one of $G$ or $H$ has no isolatable vertices.  The
direct product $G \times H$ is well-dominated if and only if $G=H=K_3$ or at least one of the factors is $K_2$ and the other factor is
a $4$-cycle or the corona of a connected graph.
}

\section{Disjunctive product} \label{sec:DisjP}

In this section we will characterize well-dominated disjunctive products of connected graphs.  In particular, we prove that at least one of the factors is a complete graph and the other factor is a well-dominated graph with domination number at most $2$.  We will need several preliminary lemmas.

\begin{lemma}{\rm \cite{topp}} \label{lem:disjind}
If $I$ is a maximal independent set of $G$ and $J$ is a maximal independent set of $H$, then $I \times J$ is a maximal independent set of $G \vee H$.
\end{lemma}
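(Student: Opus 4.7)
The plan is to verify two things about $I \times J$ inside $G \vee H$: that it is an independent set, and that it is maximal among such sets. Both claims reduce to an unpacking of the edge rule for the disjunctive product, according to which two distinct vertices $(x_1,y_1)$ and $(x_2,y_2)$ are adjacent in $G \vee H$ exactly when $x_1x_2 \in E(G)$ or $y_1y_2 \in E(H)$.

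For the independence step, I would fix two distinct vertices $(i_1,j_1),(i_2,j_2) \in I \times J$. Since $I$ is independent in $G$, either $i_1 = i_2$ or $i_1 i_2 \notin E(G)$, and similarly either $j_1 = j_2$ or $j_1 j_2 \notin E(H)$. In every combination neither coordinate contributes an edge, so $(i_1,j_1)$ and $(i_2,j_2)$ are non-adjacent in $G \vee H$.

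For the maximality step, I would take an arbitrary vertex $(x,y) \notin I \times J$ and exhibit a neighbor of it inside $I \times J$, splitting on whether $x$ lies in $I$. If $x \notin I$, maximality of $I$ yields some $i \in I$ with $xi \in E(G)$; pairing $i$ with any $j \in J$ gives $(i,j) \in I \times J$ adjacent to $(x,y)$ via the $G$-coordinate. Otherwise $x \in I$, in which case $y \notin J$, and maximality of $J$ yields some $j \in J$ with $yj \in E(H)$; then $(x,j) \in I \times J$ is adjacent to $(x,y)$ via the $H$-coordinate.

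I do not anticipate a genuine obstacle here: the statement is a direct unfolding of the definitions. The only routine check worth flagging is that in each maximality case the witness is genuinely distinct from $(x,y)$, which is automatic because the chosen coordinate is joined to $x$ (respectively $y$) by an actual edge of $G$ (respectively $H$) and so cannot coincide with it.
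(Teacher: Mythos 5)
Your proof is correct. The paper does not supply its own argument for this lemma --- it is quoted from Topp and Volkmann \cite{topp} --- and your verification is exactly the natural one: independence of $I \times J$ follows because neither coordinate of a pair of its vertices can contribute an edge under the disjunctive rule, and maximality follows by splitting on whether the first coordinate of an outside vertex lies in $I$ and using maximality of $I$ or of $J$ to produce a neighbor in the product set. The only implicit point, which is harmless here, is that $J$ is nonempty when you pick an arbitrary $j \in J$ in the first maximality case; this holds since a maximal independent set of a graph with at least one vertex is nonempty.
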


\begin{lemma} \label{lem:disjwelldom}
Suppose that $G$ and $H$ have no isolated vertices.  If $A$ is any minimal total dominating set of $H$, then $\{g\} \times A$ is a minimal dominating set of $G \vee H$  for every $g \in V(G)$ that is not a universal vertex of $G$.  Similarly, if $B$ is any minimal total dominating set of $G$, then $B \times \{h\}$ is a minimal dominating set of $G\vee H$, for every $h \in V(H)$ that is not a universal vertex of $H$.
\end{lemma}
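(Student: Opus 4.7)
The plan is to verify the two defining properties of a minimal dominating set for $S := \{g\} \times A$: that $S$ dominates $G \vee H$, and that every vertex in $S$ has a private neighbor with respect to $S$. The second assertion of the lemma is obtained from the first by exchanging the roles of the two factors in the disjunctive product, so it suffices to handle one case.

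For domination, I would fix an arbitrary $(x, y) \in V(G \vee H)$. Because $A$ totally dominates $H$, there exists $a \in A$ with $y a \in E(H)$, and this single $H$-edge forces $(x, y)$ adjacent to $(g, a) \in S$ in the disjunctive product, irrespective of $x$. Hence $S$ is a dominating set, and this step does not use the non-universality hypothesis on $g$.

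For minimality, I would fix $(g, a) \in S$ and exhibit an explicit element of $\pn[(g, a), S]$. The minimality of $A$ as a total dominating set of $H$ produces $y' \in V(H)$ with $N_H(y') \cap A = \{a\}$. The non-universality of $g$ produces $x \in V(G) \setminus N_G[g]$, so in particular $x \neq g$ and $xg \notin E(G)$. The candidate private neighbor is $(x, y')$: for any $a' \in A$ the vertex $(g, a')$ has first coordinate distinct from that of $(x, y')$, and $(g, a')$ is adjacent to $(x, y')$ iff $xg \in E(G)$ or $y' a' \in E(H)$. The first alternative is ruled out by the choice of $x$, and the second forces $a' = a$ by the choice of $y'$. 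Thus $N_{G \vee H}[(x, y')] \cap S = \{(g, a)\}$, as required.

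The only delicate point is precisely the use of the non-universal hypothesis on $g$: were $g$ universal, every first-coordinate choice $x \neq g$ would satisfy $xg \in E(G)$, so every element of $S$ would dominate every candidate through the $G$-coordinate and no private neighbor of $(g, a)$ could be isolated. Choosing $x$ outside $N_G[g]$ decouples the $G$-side contribution to adjacency and reduces the whole minimality check to the familiar private-neighbor structure of $y'$ in $H$. The statement for $B \times \{h\}$ then follows by repeating the argument with the roles of the two factors interchanged.
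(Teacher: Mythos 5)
Your proposal is correct and follows essentially the same route as the paper: domination via the total domination of $A$ in the second coordinate, and minimality by pairing a vertex $x \in V(G)\setminus N_G[g]$ with a vertex $y'$ of $H$ not dominated by $A-\{a\}$ to produce a private neighbor of $(g,a)$. The paper phrases this as showing $S-\{(g,a)\}$ fails to dominate $(x,y')$, which is the same witness.
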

\begin{proof}
Let $A$ be a minimal total dominating set of $H$ and let $g$ be a vertex of $G$ that is not universal.  Suppose $g'\in V(G)-N_G[g]$.  Let $(v,w)$ be any vertex in $G \vee H$ that does not belong to $\{g\} \times A$.  Since $A$ is a total dominating set of $H$, there exists a vertex $a \in A$
such that $aw \in E(H)$, and it follows that $(g,a)$ is adjacent to $(v,w)$.  Hence, $\{g\} \times A$ is a dominating
set of $G \vee H$.  We claim that $\{g\} \times A$ is a minimal dominating set.  To see this let $x \in A$ and let $D=( \{g\} \times A) -\{(g,x)\}$.  There exists a vertex $h \in V(H)$ such that $h \not\in N_H(A-\{x\})$  since $A$ is a minimal total dominating set of $H$.
Thus, $D$ does not dominate $G \vee H$ since $(g',h) \not \in N[D]$.  That is, $\{g\} \times A$ is a minimal dominating  set of $G \vee H$.  The proof that $B \times \{h\}$ is a minimal dominating set of $G\vee H$, for every $h \in V(H)$ when $B$ is a minimal total dominating set of $G$ is symmetric to the above.
\end{proof}

\begin{lemma} \label{lma:condnotexist}
There does not exist a connected graph $G$ with $2 \le \alpha(G) = \gamma(G)$ and $\gamma_t(G) = 2\gamma(G)$.
\end{lemma}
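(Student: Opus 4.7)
Assume, for contradiction, that such a $G$ exists, and set $k = \gamma(G) = \alpha(G) \ge 2$. Since $G$ is connected on at least two vertices, it has no isolated vertex, so from any minimum dominating set one obtains a total dominating set of size at most $2k$ by adjoining one neighbor of each vertex; the hypothesis $\gamma_t(G) = 2k$ is therefore the extremal case of this standard bound. My plan is to exploit this extremality twice---once on a fixed maximum independent set and once on each single-swap variant of it---to force the vertex set to split into independent ``stars'' around each element of the independent set, which will contradict connectedness.

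Fix a maximum independent set $I = \{v_1, \ldots, v_k\}$, which is simultaneously a minimum dominating set since $\alpha(G) = \gamma(G)$. I would first show that $N(v_1), \ldots, N(v_k)$ are pairwise disjoint. Indeed, if $v_i$ and $v_j$ shared a common neighbor $z$, then setting $f(v_i) = f(v_j) = z$ and choosing an arbitrary neighbor $f(v_\ell)$ for each remaining $v_\ell$ would produce a total dominating set $I \cup f(I)$---it dominates $V(G)$ because $I$ already does, and every vertex of $I \cup f(I)$ has a neighbor inside the set---of size at most $k + (k-1) = 2k - 1$, violating $\gamma_t(G) = 2k$. Writing $A_i = N(v_i)$, the above yields a disjoint decomposition $V(G) \setminus I = A_1 \sqcup \cdots \sqcup A_k$.

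Next, for any $w \in A_i$, the pairwise disjointness of the $A_j$ forces $w$ to have no neighbor among the $v_j$ with $j \ne i$, so $I' = (I \setminus \{v_i\}) \cup \{w\}$ is again independent and of cardinality $k$, hence once more a maximum independent set and minimum dominating set. Rerunning the extremality argument of the previous paragraph with $I'$ in place of $I$ yields $N(w) \cap N(v_j) = \emptyset$ for every $j \ne i$; combined with the fact that $v_i$ is the only vertex of $I$ adjacent to $w$, this gives $N[w] \subseteq \{v_i\} \cup A_i = N[v_i]$. Letting $w$ range over $A_i$ then shows that $\{v_i\} \cup A_i$ is closed under taking closed neighborhoods, so it is a union of connected components of $G$. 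Connectedness forces $V(G) = \{v_i\} \cup A_i$, whence $k = 1$, contradicting $k \ge 2$. The main obstacle is precisely this second step: one must first have the disjointness of the $A_j$ for the fixed $I$ in hand to know that every single-swap $I'$ is a genuine maximum independent set, so that the same extremality argument can be reapplied to it verbatim.
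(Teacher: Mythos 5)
Your proof is correct, and while it opens with the same key move as the paper, it finishes along a genuinely different line. Both arguments begin by fixing a maximum independent set $I$ (which is also a minimum dominating set, since $\alpha(G)=\gamma(G)$) and showing that the neighborhoods of its members are pairwise disjoint, because a shared neighbor would yield a total dominating set of size at most $2\gamma(G)-1$. From there the paper takes a structural detour: it uses $\alpha(G)=\gamma(G)$ again to show that each $N[a_i]$ induces a clique, invokes connectedness to find an edge $wy$ with $w\in N(a_1)$ and $y\in N(a_2)$, and then builds a second, different total dominating set $(I-\{a_1,a_2\})\cup\{w,y\}\cup\{b_3,\dots,b_m\}$ of size $2\gamma(G)-2$ to reach the contradiction. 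You instead reapply the one extremality argument to every single-swap set $I'=(I\setminus\{v_i\})\cup\{w\}$ --- correctly noting that the disjointness of the $A_j$ guarantees $I'$ is again independent of size $\alpha(G)$, hence a maximum independent set and minimum dominating set --- and deduce that $N[w]\subseteq\{v_i\}\cup A_i$ for all $w\in A_i$, so each $\{v_i\}\cup A_i$ is a union of components, contradicting connectedness directly. Your route avoids both the clique step and the second total-dominating-set construction, at the cost of having to verify that the swapped sets really are maximum independent sets (which you do); the paper's route is more constructive but needs two separate small-total-dominating-set gadgets. Both are sound.
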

\begin{proof}
Suppose  for the sake of a contradiction that $G$ is a connected graph with $2 \le m=\alpha(G) = \gamma(G)$ and $\gamma_t(G) = 2\gamma(G)$. Let $I = \{a_1, a_2, \ldots, a_m\}$ be a maximum independent set of $G$, and  for each $i\in[m]$, let $b_i$ be a specified neighbor of $a_i$.   First note that $N[a_i] \cap N[a_j] = \emptyset$ whenever $1\le i<j\le m$. Otherwise, if $u \in N[a_i] \cap N[a_j]$, then $G$ contains the  total dominating set
\[D= I \cup \{u\} \cup \bigcup_{\substack{k=1\\k \notin\{i, j\}}}^{m} \{b_k\}\,,\]
whose cardinality is less than $2\gamma(G)$.
Also, for any $i\in [m]$, if $u, v \in N[a_i]$, then $uv \in E(G)$. Otherwise, if $uv \notin E(G)$, then there exists an independent set, $\{u, v\}\cup(I - \{a_i\})$, of size $\alpha(G) + 1$. Since $G$ is connected, there exist $1\le r<s\le m$  with $w \in N(a_r)$, $y \in N(a_s)$, and $wy \in E(G)$. Reindexing if necessary, we may assume there exists $w \in N(a_1)$ and $y \in N(a_2)$ such that $wy \in E(G)$. However,
\[(I - \{a_1, a_2\}) \cup \{w, y\} \cup \bigcup_{k=3}^{m} \{b_k\}\] is a total dominating set of $G$ whose cardinality is less than $2\gamma(G)$, which is a contradiction.
\end{proof}

\begin{lemma} \label{lem:disjwelldomnotexist}
Let $G$ be a connected graph, and $H$ be a graph with no isolated vertices. If neither $G$ nor $H$
is a complete graph, then $G \vee H$ is not well-dominated.
\end{lemma}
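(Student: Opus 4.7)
The plan is to argue by contradiction: assume $G \vee H$ is well-dominated, so every minimal dominating set of $G \vee H$ has the common cardinality $\gamma(G \vee H)$. I would harvest three families of minimal dominating sets of $G \vee H$ whose sizes, taken together, pin down the parameters of $G$ and $H$ tightly enough to contradict Lemma~\ref{lma:condnotexist}.

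First, Lemma~\ref{lem:disjind} tells us that $I \times J$ is a maximal independent set (hence a minimal dominating set) of $G \vee H$ whenever $I$ is a maximal independent set of $G$ and $J$ one of $H$. The common-cardinality condition forces $|I|\cdot|J|$ to be constant, and fixing one factor at a time shows that both $G$ and $H$ are well-covered with $\gamma(G \vee H) = \alpha(G)\alpha(H)$. Next, since neither $G$ nor $H$ is complete, each factor has a non-universal vertex, and since $G$ is connected with at least two vertices it also has no isolated vertices. Hence Lemma~\ref{lem:disjwelldom} applies in both directions: for any minimal total dominating set $A$ of $H$ and any non-universal $g \in V(G)$, the set $\{g\} \times A$ is a minimal dominating set of $G \vee H$, forcing $|A| = \alpha(G)\alpha(H)$. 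Ranging over all minimal total dominating sets $A$ of $H$ gives $\gamma_t(H) = \alpha(G)\alpha(H)$, and symmetrically $\gamma_t(G) = \alpha(G)\alpha(H)$.

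From here the numerical squeeze is quick. Combining $\gamma_t(G) \le 2\gamma(G) \le 2\alpha(G)$ with $\gamma_t(G) = \alpha(G)\alpha(H)$ yields $\alpha(H) \le 2$, and symmetrically $\alpha(G) \le 2$; since neither graph is complete, $\alpha(G) = \alpha(H) = 2$. A universal vertex of $G$ would form a maximal independent set of size $1$, contradicting $\alpha(G)=2$, so $\gamma(G) \ge 2$; combined with $\gamma(G) \le i(G) = \alpha(G) = 2$ from well-coveredness this gives $\gamma(G) = 2$. Therefore $\gamma_t(G) = 4 = 2\gamma(G)$ with $2 \le \alpha(G) = \gamma(G)$ and $G$ connected, which contradicts Lemma~\ref{lma:condnotexist}.

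The only real obstacle is careful bookkeeping around the hypotheses of Lemma~\ref{lem:disjwelldom}: one must confirm that each factor contains a non-universal vertex and that both lack isolated vertices before the lemma can be invoked symmetrically in $G$ and $H$. Both conditions follow immediately from the standing hypotheses (neither factor complete, $G$ connected, $H$ without isolated vertices), but they need to be stated explicitly to keep the chain of equalities $\gamma_t(G) = \gamma_t(H) = \alpha(G)\alpha(H)$ above board.
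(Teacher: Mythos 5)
Your proposal is correct and follows essentially the same route as the paper: harvest minimal dominating sets of $G \vee H$ from Lemmas~\ref{lem:disjind} and~\ref{lem:disjwelldom} to force $\alpha(G)\alpha(H)=\gamma_t(G)=\gamma_t(H)$, then squeeze with $\gamma_t(G)\le 2\gamma(G)\le 2\alpha(G)$ to reach the configuration forbidden by Lemma~\ref{lma:condnotexist}. The only cosmetic difference is that you pin down $\alpha(G)=\alpha(H)=2$ explicitly, whereas the paper derives $\alpha(G)=\gamma(G)$ and $\gamma_t(G)=2\gamma(G)$ directly from the inequality chain without evaluating $\alpha(G)$.
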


\begin{proof}
Let $H$ be a graph with no isolated vertices such that $H$ is not a complete graph. Suppose there exists a connected graph $G$ that is not
a complete graph such that $G \vee H$ is well-dominated. Hence, $\alpha(G) \ge 2$ and $\alpha(H) \ge 2$.  By Lemma~\ref{lem:disjwelldom}, the graph $G \vee H$ has a minimal dominating set of size $\gamma_t(G)$ as well as a minimal dominating set of size $\gamma_t(H)$. In addition, by Lemma~\ref{lem:disjind}, $G \vee H$ has a minimal dominating set of size $\alpha(G)\alpha(H)$. Since $G \vee H$ is well-dominated, it must be the case that
\begin{equation} \label{eqn:disjuctive}
\alpha(G)\alpha(H)= \gamma_t(G)=\gamma_t(H).
\end{equation}

Since $\gamma(G) \leq \alpha(G)$ and $\gamma_t(G) \leq 2 \gamma(G)$, it follows from \eqref{eqn:disjuctive} that
\[2\alpha(G)\le \alpha(G)\alpha(H)= \gamma_t(G)\le 2 \gamma(G)\,,\]
which in turn implies that $\alpha(G)=\gamma(G)$.  Similarly, $\alpha(H)=\gamma(H)$.  Using~\eqref{eqn:disjuctive} again and the fact
that $2 \le \alpha(H)$ we get
\[2\gamma(G)\le \alpha(G)\alpha(H)= \gamma_t(G) \le 2\gamma(G)\,,\]
and this implies that $\gamma_t(G)=2\gamma(G)$.  Thus, $2 \le \alpha(G) = \gamma(G)$ and $\gamma_t(G) = 2\gamma(G)$. By Lemma~\ref{lma:condnotexist} such a graph does not exist, and the theorem is proved.
\end{proof}

In case one of the factors of a disjunctive product is a complete graph we have the following.  First, it is clear that
$K_1 \vee H$ is well-dominated if and only if $H$ is well-dominated.  For a disjunctive product with one of the factors
being a complete graph of order at least $2$ we can say more.

\begin{lemma} \label{lem:disjunctive-completefactor}
Let $n$ be a positive integer, $n \ge 2$.  The disjunctive product $K_n \vee H$ is well-dominated if and only if $H$ is a well-dominated graph with $\gamma(H) \leq 2$.
\end{lemma}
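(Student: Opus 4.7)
The plan is to analyze minimal dominating sets of $K_n \vee H$ by grouping vertices according to the first coordinate: for a set $D\subseteq V(K_n\vee H)$ write $D_a=\{x\in V(H):(a,x)\in D\}$ for its $a$-column. The key observation is that in $K_n\vee H$, any two vertices with distinct first coordinates are adjacent via the $K_n$-edges, while within a single column adjacency exactly matches $E(H)$.

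For the forward direction, assume $K_n \vee H$ is well-dominated. I would first establish a lifting lemma: for every minimal dominating set $D_H$ of $H$ and every $a \in V(K_n)$, the single-column set $\{a\} \times D_H$ is a minimal dominating set of $K_n \vee H$. Domination is immediate, and if $y$ is a private neighbor of $x$ with respect to $D_H$ in $H$, then $(a, y)$ is a private neighbor of $(a, x)$ with respect to $\{a\}\times D_H$ in $K_n \vee H$ because all other columns are empty. Since $K_n\vee H$ is well-dominated, all such lifts share the same cardinality, proving $H$ itself is well-dominated. To establish the bound $\gamma(H)\le 2$: if $H$ is complete, $\gamma(H)=1$; otherwise $H$ has a non-universal vertex $x$, and I would verify that $\{(a,x),(b,x)\}$ with $a\neq b$ (available since $n\ge 2$) is a minimal dominating set of size $2$, where a non-neighbor $y$ of $x$ in $H$ makes $(b,y)$ a private neighbor of $(a,x)$. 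Comparing this size-$2$ set with the lift of a minimum dominating set of $H$ forces $\gamma(H)=2$.

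For the backward direction, if $H$ is complete then $K_n\vee H$ is itself complete and thus well-dominated. When $H$ is well-dominated with $\gamma(H)=2$, I will show every minimal dominating set $D$ of $K_n\vee H$ has size exactly $2$ by splitting on the number of nonempty columns of $D$. In the one-column case, $D=\{a\}\times D_a$ and minimality of $D$ in $K_n\vee H$ reduces to minimality of $D_a$ in $H$, giving $|D|=\gamma(H)=2$. In the multi-column case, I examine where a private neighbor of $(a,x)\in D$ can lie: it cannot be in column $a$, because any other nonempty column of $D$ contributes an extra neighbor in $N[(a,y)]\cap D$; so some $(d,w)$ with $d\neq a$ must be private to $(a,x)$. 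Computing $N[(d,w)]\cap D=\{(a,x)\}$ then forces $D_a=\{x\}$, $D_e=\emptyset$ for $e\notin\{a,d\}$, and $N_H[w]\cap D_d=\emptyset$. Applying the same argument to any element of $D_d$ collapses $D$ to two singletons, and the required non-neighbor $w$ exists because $\gamma(H)=2$ implies $H$ has no universal vertex.

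The main obstacle is the multi-column analysis in the backward direction. The key rigidity is that a private neighbor of $(a,x)\in D$ lying outside column $a$ is adjacent to every vertex of $D$ in every column other than its own, which simultaneously squeezes $D$ to have at most two nonempty columns and forces each of those columns to be a singleton. Making this squeezing precise—and confirming the required private neighbors actually exist by exploiting that no vertex of $H$ is universal when $\gamma(H)=2$—is the technical heart of the argument.
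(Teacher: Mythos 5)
Your proposal is correct and follows essentially the same route as the paper: lift minimal dominating sets of $H$ into a single $H$-layer to prove $H$ is well-dominated and to force $\gamma(H)\le 2$ via the $2$-element sets built from a non-universal vertex, and for the converse classify minimal dominating sets of $K_n\vee H$ by how many layers they meet. The only difference is cosmetic: for sets meeting at least two $H$-layers the paper simply notes that any two vertices from distinct layers already dominate $K_n\vee H$, so minimality immediately caps the size at $2$, which shortcuts your private-neighbor squeeze in the multi-column case.
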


\begin{proof}
Let $n\ge 2$ be a positive integer.  First, suppose $K_n \vee H$ is well-dominated.   The conclusion follows if $H$ is a complete graph since the disjunctive product of two complete graphs is also complete.  Thus, we assume that $H$ has a vertex $y$ that does not dominate all of $V(H)$.  Note that if $D$ is any minimal dominating set of $H$, then for any $i \in V(K_n)$, $D' = \{i\} \times D$ is a minimal dominating set of $K_n \vee H$. It follows immediately that $|D_1|=|D_2|$ for every pair of minimal dominating sets of $H$, and therefore $H$ is well-dominated with
$\gamma(H)=\gamma(K_n \vee H)$.  Furthermore, for any two distinct vertices  $i$ and $j$ of $K_n$, the set $\{(i, y), (j, y)\}$ is a minimal, and hence a minimum, dominating set of $K_n \vee H$.  We conclude that $\gamma(H)=2$.

Now, suppose $H$ is a well-dominated graph with $\gamma(H) \leq 2$. By Proposition~\ref{prop:wd-Implies-wc}, $H$ is also well-covered and thus  $\alpha(H)=\gamma(H)$. If $\alpha(H) = 1$, then $H$ is a complete graph and $K_n \vee  H$ is a complete graph and thus is well-dominated.  Next, assume $\alpha(H) =\gamma(H)= 2$.  This implies that $K_n \vee H$ has no universal vertex.  It follows that any subset of $V(K_n \vee H)$ consisting of two vertices from distinct $H$-layers is a minimal dominating set.  If $D'$ is a minimal dominating set of $K_n \vee H$ such that $|D'| \ge 3$, then $D'=\{i\} \times D$ for some $i \in [n]$ and some minimal dominating set $D$ of $H$.  Since $H$ is well-dominated with $\gamma(H)=2$, there is no such set $D'$.  Therefore, all minimal dominating sets of $K_n \vee H$ have cardinality $2$, which implies that $K_n \vee H$ is well-dominated.
\end{proof}

Combining Lemmas~\ref{lem:disjwelldomnotexist} and~\ref{lem:disjunctive-completefactor} we get the promised characterization of well-dominated disjunctive products.

\noindent \textbf{Theorem~\ref{thm:disj-characterization}} \emph{
Let $G$ and $H$ be nontrivial connected graphs.  The disjunctive product $G \vee H$ is well-dominated if and only if at least one of $G$ or $H$ is a complete graph and the other factor is well-dominated with domination number at most $2$.
}

\end{document}